\documentclass[12pt]{amsart}


\usepackage{amsfonts,amsmath,amssymb,color,amscd,amsthm}
\usepackage[T1]{fontenc} 
\usepackage[francais,english]{babel} 
\usepackage{typearea}
\usepackage[T1]{fontenc} 

\usepackage{graphics}

\usepackage[all,arc,curve,color,frame,graph,matrix,cmtip]{xy}

\usepackage[shortlabels]{enumitem}
\setlist[enumerate]{label=\rm{(\arabic*)}}
\setlist[enumerate,2]{label=\rm({\it\roman*})}
\setlist[itemize]{label=\raisebox{0.25ex}{\tiny$\bullet$}}

\usepackage[ colorlinks, linktocpage, citecolor = blue, linkcolor = blue]{hyperref} 

\newcommand\Q{{\mathbb Q}}
\newcommand\R{{\mathbb R}}
\newcommand\A{{\mathbb A}}

\newcommand\C{{\mathbb C}}
\newcommand\F{{\mathbb F}}
\renewcommand\k{\mathrm{k}}
\newcommand\kk{\mathbf{k}}

\newcommand\p{{\mathbb P}}

\newtheorem{theorem}{Theorem}
\newtheorem{lemma}{Lemma}[section]
\newtheorem{corollary}[lemma]{Corollary}
\newtheorem{proposition}[lemma]{Proposition}

\theoremstyle{definition}
\newtheorem{definition}[lemma]{Definition}

\theoremstyle{remark}
\newtheorem{remark}[lemma]{Remark}
\newtheorem{example}[lemma]{Example}

\DeclareMathOperator{\Aut}{Aut}
\DeclareMathOperator{\GL}{GL}
\DeclareMathOperator{\PGL}{PGL}
\DeclareMathOperator{\PSL}{PSL}
\DeclareMathOperator{\SL}{SL}

\DeclareMathOperator{\Bir}{Bir}
\DeclareMathOperator{\Cr}{Cr}

\DeclareMathOperator{\id}{\mathrm{id}}

\title{Topological simplicity of the Cremona groups}

\author{J\'er\'emy Blanc}
\address{J\'er\'emy Blanc, Universit\"{a}t Basel, Spiegelgasse $1$, CH-$4051$ Basel, Switzerland.}
\email{jeremy.blanc@unibas.ch}

\author{Susanna Zimmermann}
\address{Susanna Zimmermann, Universit\"{a}t Basel, Spiegelgasse $1$, CH-$4051$ Basel, Switzerland}
\email{susanna.zimmermann@unibas.ch}

\date{\today}

\thanks{Both authors acknowledge support by the Swiss National Science Foundation Grant  ``Birational Geometry'' PP00P2\_153026.}

\begin{document}
\begin{abstract}The Cremona group is topologically simple when endowed with the Zariski or Euclidean topology, in any dimension $\ge 2$ and over any infinite field. Two elements are moreover always connected by an affine line, so the group is path-connected. \end{abstract}
\maketitle

\section{Introduction}
Fixing a field $\k$ and an integer $n$, the \emph{Cremona group of rank $n$ over $\k$} can be described algebraically as the field automorphism group
$\Cr_n(\k)=\Aut_\k(\k(x_1,\dots,x_n))$
or geometrically as the group
$\Bir_{\p^n}(\k)$
of birational transformations of $\p^n$ that are defined over the field $\k$.

\bigskip

As described in \cite{Se} (see section~\ref{Section:ZarTopCremona} below), one can endow the group with a natural \emph{Zariski topology}, which is induced by \emph{morphisms} $A\to \Bir_{\p^n}$, where $A$ is an algebraic varieties (see $\S\ref{Section:Reminders}$). In \cite{Bl10}, it was shown that the group
$\Bir_{\p^2}(\k)$ was topologically simple when endowed with this topology (i.e.~it does not contain any non-trivial closed normal strict subgroup), when $\k$ is algebraically closed. In this text, we generalise the result and prove the following:

\begin{theorem}\label{TheoremZariski}
For each infinite field $\k$ and each $n\ge 1$, the group $\Bir_{\p^n}(\k)$ is topologically simple when endowed with the Zariski topology $($i.e.~it does not contain any non-trivial closed normal strict subgroup$)$.
\end{theorem}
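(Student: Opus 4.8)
The plan is to reduce Theorem~\ref{TheoremZariski} to a density statement about normal closures and then to prove that density. A closed normal subgroup contains the Zariski closure of the abstract normal subgroup generated by any one of its elements, so the theorem is equivalent to the following: for every $g\in\Cr_n(\k)$ with $g\neq\id$, the Zariski closure $\overline{G}$ of the normal subgroup $\langle\langle g\rangle\rangle$ generated by $g$ is all of $\Cr_n(\k)$. Two features of $\overline{G}$ will be used throughout: it is again normal, and — since for every morphism $A\to\Cr_n(\k)$ the preimage of $\overline{G}$ is Zariski closed in $A$ — any one-parameter family $t\mapsto\psi_t$ with $\psi_t\in\langle\langle g\rangle\rangle$ for $t$ in an infinite set has all of its limiting values in $\overline{G}$ as well.

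\emph{Reaching the linear subgroup.} For $n=1$ one has $\Cr_1(\k)=\PGL_2(\k)$, and the claim follows from two classical facts over an infinite field: a non-trivial normal subgroup of $\PGL_2(\k)$ contains $\PSL_2(\k)$, and $\PSL_2(\k)$, generated by the $\k$-points of its root subgroups (each isomorphic to $\A^1$), is Zariski dense in $\PGL_2(\k)$. The same two inputs with $n+1$ in place of $2$ show, for any $n$, that if $\overline{G}$ meets $\PGL_{n+1}(\k)$ non-trivially then $\overline{G}\supseteq\PGL_{n+1}(\k)$. To reach that situation when $n\ge 2$ and $g\notin\PGL_{n+1}(\k)$, I would conjugate $g$ — possibly after a preliminary conjugation and replacement of $g$ by a commutator $[h,g]=hgh^{-1}g^{-1}$, still in $\langle\langle g\rangle\rangle$ — by a one-parameter group of homotheties centred at a well-chosen point; this produces a morphism $\A^1\to\Cr_n(\k)$ landing in $\langle\langle g\rangle\rangle$ for $t\neq 0$ and degenerating, as $t\to 0$, to a non-trivial linear map. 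By the mechanism above that limit lies in $\overline{G}$, so $\overline{G}\supseteq\PGL_{n+1}(\k)$.

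\emph{From the linear subgroup to the whole group.} It remains to show that the only closed normal subgroup $N$ with $N\supseteq\PGL_{n+1}(\k)$ is $\Cr_n(\k)$. Here I would use the affine-line connectedness: every $f\in\Cr_n(\k)$ is joined to $\id$ by a morphism $\phi\colon\A^1\to\Cr_n(\k)$. If such a $\phi$ can be chosen with $\phi(t)$ in the abstract normal closure $\langle\langle\PGL_{n+1}(\k)\rangle\rangle$ for all but finitely many $t$, then, as $N\supseteq\PGL_{n+1}(\k)$ and $N$ is normal, $\{t:\phi(t)\in N\}$ is an infinite closed subset of $\A^1$, hence equals $\A^1$, so $f=\phi(1)\in N$. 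To cover every $f$ I would proceed by induction on $n$, using a Noether--Castelnuovo-type generation of $\Cr_n(\k)$ by $\PGL_{n+1}(\k)$ together with de Jonquières maps forming $\Jonq_n\cong\PGL_2(\k(x_2,\dots,x_n))\rtimes\Cr_{n-1}(\k)$ (and, over non-closed fields, further explicit generators): the $\Cr_{n-1}(\k)$-factor is handled by the inductive hypothesis and the $\PGL_2$-factor by the $n=1$ analysis over the function field $\k(x_2,\dots,x_n)$.

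\emph{Main obstacle.} This skeleton rests on two substantial inputs. The hardest is the affine-line connectedness itself — producing, for an arbitrary $f$, a genuine algebraic family of \emph{birational} maps through $f$ and $\id$ — which is the technical core. The second is uniformity over all infinite fields: in positive characteristic there are non-linear birational maps with identically scalar differential, such as $(x,y)\mapsto(x,y+x^p)$, so the degeneration used to reach $\PGL_{n+1}(\k)$ cannot be read off from the differential of $g$ alone and must be produced only after conjugating by a non-linear map; and the generation statement used in the final step is delicate over non-closed fields and must itself be obtained from, or replaced by, the affine-line statement together with the normality of $\overline{G}$.
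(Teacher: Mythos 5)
Your first half is sound and essentially reproduces the paper's route: the commutator trick to produce an element of the normal subgroup fixing a rational point with non-trivial derivative, the conjugation by homotheties centred at that point giving a morphism $\A^1\to\Bir_{\p^n}$ whose value at $0$ is a non-trivial linear map, closedness plus infiniteness of $\k$ to conclude that this limit lies in the subgroup, and then simplicity of $\PSL_{n+1}(\k)$ together with its Zariski density in $\PGL_{n+1}(\k)$ to get $\PGL_{n+1}(\k)\subseteq\overline{G}$ (the paper does this last step slightly differently, via an explicit conjugation by a quadratic map, but your density argument is correct for a closed subgroup). Your attention to the characteristic-$p$ issue with trivial differentials is also well placed.

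The second half, however, has a genuine gap, and it is precisely the step the paper resolves by a different idea. You reduce ``a closed normal subgroup containing $\PGL_{n+1}(\k)$ is everything'' to (a) the affine-line connectedness of $\Bir_{\p^n}(\k)$, which you leave unproven and call the technical core, and (b) a Noether--Castelnuovo-type generation of $\Bir_{\p^n}(\k)$ by $\PGL_{n+1}(\k)$ and de Jonqui\`eres maps. Input (b) is not available: for $n\ge 3$ the subgroup generated by $\Aut_{\p^n}$ and $\Jonq_n$ is a \emph{proper} subgroup of $\Bir_{\p^n}$ (its elements contract only hypersurfaces of restricted birational type, whereas general Cremona maps contract non-rational ones --- this is the Hudson--Pan obstruction), and even for $n=2$ the generation fails over non-closed fields; so the induction you sketch cannot start beyond $n=2$, $\k=\kk$. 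Input (a) is not a shortcut either: in the paper the connectedness statement is itself \emph{derived} from the same degeneration machinery, not an independent ingredient. The missing idea is that no generation or connectedness is needed: given $f\neq\id$, choose $\alpha_1,\alpha_2\in\Aut_{\p^n}(\k)$ so that $g=\alpha_1 f\alpha_2$ fixes a $\k$-point $p$ with non-trivial derivative, let $\nu(t)$ be the homotheties at $p$ and $\rho_1(t)=\nu(t)^{-1}g^{-1}\nu(t)$ (extended to $t=0$ with $\rho_1(0)\in\Aut_{\p^n}(\k)$), and set
\[
\rho_2(t)=\alpha_1^{-1}\circ\bigl(g\circ\nu(t)^{-1}\circ g^{-1}\bigr)\circ\nu(t)\circ\rho_1(0)^{-1}\circ\alpha_2^{-1}.
\]
For $t\neq 0$ this lies in the abstract normal closure of $\Aut_{\p^n}(\k)$ (it is a product of linear maps and a conjugate of a linear map), while $\rho_2(0)=f$; since the closed subgroup $N$ contains $\PGL_{n+1}(\k)$ and $\k$ is infinite, $f\in N$. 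Without this (or an equivalent) construction, your argument does not close.
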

\begin{remark}
For each field $\k$, the group $\Bir_{\p^2}(\k)$ is not simple as an abstract group \cite{CL13,Lon}. If $\k=\mathbb{R}$, it contains moreover normal subgroups of index $2^m$ for each $m\ge 1$ \cite{Zim}. For each $n\ge 3$ and each field $\k$, deciding wether the abstract group $\Bir_{\p^n}(\k)$ is simple or not is a still wide open question.
\end{remark}
\begin{remark}
If $\k$ is a finite field, the Zariski topology on $\Bir_{\p^n}(\k)$ is the discrete topology (see Lemma~\ref{Lemm:DiscreteFinite}), so the topological simplicity is equivalent to the simplicity as an abstract group, and is therefore false for $n=2$, and open for $n\ge 3$. For $n=1$, this is true if and only if $\k=\F_{2^a}$, $a\ge 2$ (see Lemma~\ref{Lemm:PGL2Finitefield}).
\end{remark}

If $\k$ is a local field, which we assume to be locally compact and nondiscrete (for example $\R$, $\C$, $\F_q((t))$, or a finite extension of $\Q_p$), then there exists a natural topology on $\Bir_{\p^n}(\k)$, which makes it a Hausdorff topological group, and whose restriction on any algebraic subgroup (for example on $\Aut_{\p^n}(\k)=\PGL_{n+1}(\k)$ and $(\PGL_2(\k))^n\subset \Aut_{(\p^1)^n}(\k)$) is the Euclidean topology (the classical topology given by distances of matrices) \cite[Theorem 3]{BF13}. This topology was called \emph{Euclidean topology} of $\Bir_{\p^n}(\k)$. We will show the following analogue of Theorem~\ref{TheoremZariski}, for this topology:
\begin{theorem}\label{TheoremEuclidean}
For each $($locally compact$)$ local field $\k$ and each $n\ge 2$, the topological group $\Bir_{\p^n}(\k)$ is topologically simple when endowed with the Euclidean topology $($i.e.~it does not contain any non-trivial closed normal strict subgroup$)$.
\end{theorem}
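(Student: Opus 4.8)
I would prove the formally stronger statement that for every non-trivial $g\in\Bir_{\p^n}(\k)$ the abstract normal subgroup $\langle\langle g\rangle\rangle$ generated by $g$ is dense in $\Bir_{\p^n}(\k)$ for the Euclidean topology. This suffices: a proper closed normal subgroup $N$ could then contain no non-trivial element, since otherwise it would contain the Euclidean closure of the corresponding $\langle\langle g\rangle\rangle$, hence all of $\Bir_{\p^n}(\k)$; so $N=\{\id\}$. Two facts are used throughout. First, by \cite[Theorem~3]{BF13} every morphism $A\to\Bir_{\p^n}$ from a $\k$-variety is continuous for the Euclidean topologies; hence the Euclidean closure of the image of such a morphism whose values lie in $\langle\langle g\rangle\rangle$ still lies in $\overline{\langle\langle g\rangle\rangle}$, and the affine line joining any two elements (underlying the path-connectedness stated in the abstract) is a Euclidean path. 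Second, since $\k$ is infinite, $\PSL_{n+1}(\k)$ is simple as an abstract group, so each of its non-trivial normal subgroups equals it.

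\emph{Step 1: producing a linear element.} Put $H=\Aut_{\p^n}(\k)=\PGL_{n+1}(\k)$. The first task is to show that $\langle\langle g\rangle\rangle$ contains a non-trivial element of $H$, and in fact enough of them to normally generate $\PSL_{n+1}(\k)$. If $\mdeg(g)=1$ this is immediate. If $\mdeg(g)\ge2$, one studies the linear system and base locus of $g$: conjugating $g$ by elements of $H$ in sufficiently general position and composing finitely many of the resulting maps and their inverses — all of which lie in $\langle\langle g\rangle\rangle$ — one forces the base points to cancel and the multidegree to drop, producing a non-trivial element of $H$, possibly only as a Euclidean limit along an algebraic family (which is why the first fact above is needed). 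Once $\langle\langle g\rangle\rangle\cap\PSL_{n+1}(\k)\ne\{\id\}$, the second fact yields $\PSL_{n+1}(\k)\subseteq\langle\langle g\rangle\rangle$.

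\emph{Step 2: density of the normal closure of $\PSL_{n+1}(\k)$.} Let $D$ be the Euclidean closure of the normal subgroup of $\Bir_{\p^n}(\k)$ generated by $\PSL_{n+1}(\k)$; by Step~1 it suffices to prove $D=\Bir_{\p^n}(\k)$. Given $f\in\Bir_{\p^n}(\k)$, fix a morphism $\varphi\colon\A^1\to\Bir_{\p^n}$ with $\varphi(0)=\id$ and $\varphi(1)=f$. The set $\{t\in\k:\varphi(t)\in D\}$ is closed (the preimage of the closed set $D$ under the continuous map $\varphi$) and contains $0$; the point is to show it is dense, which forces $f=\varphi(1)\in D$. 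The plan is to choose $\varphi$ so that, for $t$ in a dense subset, $\varphi(t)$ is a product of $\Bir_{\p^n}(\k)$-conjugates of linear maps and of Jonquières transformations, the latter treated through the isomorphism $\Jonq\cong\PGL_2(\k(y_1,\dots,y_{n-1}))\rtimes\Bir_{\p^{n-1}}(\k)$, which reduces matters to the same statement in rank $n-1$. The base case $n=2$ over an algebraically closed $\k$ is as in \cite{Bl10}: the classical theorem that $\Bir_{\p^2}$ is then generated by $\PGL_3$ and a quadratic involution makes the normal closure of $\PGL_3(\k)$ of index at most two in $\Bir_{\p^2}(\k)$, and since the latter is connected it has no proper closed subgroup of finite index, so $D=\Bir_{\p^2}(\k)$.

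\emph{Expected difficulties.} The genuinely hard point is Step~1 over a non-algebraically-closed local field: extracting a non-trivial linear map — and enough of them to normally generate $\PSL_{n+1}(\k)$ — while using only compositions of $H$-conjugates of $g^{\pm1}$ and Euclidean limits requires controlling base loci under conjugation by $\k$-rational linear maps without the comfort of algebraic closure. A second, less formal obstacle is in Step~2 over such fields: unlike the algebraically closed case of \cite{Bl10}, where the normal closure of $\PGL_{n+1}(\k)$ has finite index so that density is essentially automatic, over general infinite $\k$ this normal closure may have infinite index (witness the non-trivial quotients of $\Bir_{\p^2}(\R)$), so one needs a true topological density argument — for instance a Baire-category argument exploiting the local compactness of $\k$ together with the continuity statement of \cite{BF13}.
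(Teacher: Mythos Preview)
Your proposal has a genuine gap: you are missing the key construction that makes both of your steps work cleanly and uniformly, and the substitutes you sketch (base-locus cancellation, Jonqui\`eres decomposition with induction on $n$, Baire category) are either vague, not known to work, or unnecessary.

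The missing idea is the \emph{linearisation at a fixed point by conjugation with scalings}. If $g\in\Bir_{\p^n}(\k)$ is a local isomorphism at a fixed $\k$-point $p$ (say $p=[1:0:\dots:0]$), then writing $\theta_t\colon x\mapsto tx$ in the affine chart $x_0=1$, the family $t\mapsto\theta_t^{-1}\circ g\circ\theta_t$ extends to a $\k$-morphism $\rho\colon\A^1\to\Bir_{\p^n}$ with $\rho(0)=D_pg\in\Aut_{\p^n}(\k)$. This single observation resolves both of your steps:

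\textbf{Step~1.} Given non-trivial $f\in N$, pick $p\ne q=f(p)$ in the domain of definition, choose $\alpha\in\Aut_{\p^n}(\k)$ fixing $p$ and $q$ with $D_p(\alpha^{-1}f^{-1}\alpha f)\ne\id$ (easy linear algebra), and apply the construction to $g=\alpha^{-1}f^{-1}\alpha f\in N$. Since $\rho(t)\in N$ for $t\ne0$ and $N$ is Euclidean-closed, $\rho(0)\in N\cap\Aut_{\p^n}(\k)\setminus\{\id\}$. There is no need to ``cancel base points'' or control linear systems under $\k$-rational conjugation; the limit is just the derivative. From here $\PSL_{n+1}(\k)\subset N$ by simplicity, and a short explicit conjugation by a single birational map (sending $\mathrm{diag}(1,a,a^{-1},1,\dots,1)$ to $\mathrm{diag}(1,a,1,\dots,1)$) upgrades this to $\PGL_{n+1}(\k)\subset N$.

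\textbf{Step~2.} For arbitrary $f$, choose $\alpha_1,\alpha_2\in\Aut_{\p^n}(\k)$ so that $g=\alpha_1 f\alpha_2$ fixes a point $p$; apply the construction to $g^{-1}$ to get $\rho_1(t)=\nu(t)^{-1}g^{-1}\nu(t)$ with $\nu(t)\in\Aut_{\p^n}(\k)$ and $\rho_1(0)\in\Aut_{\p^n}(\k)$. Then
\[
\rho_2(t)=\alpha_1^{-1}\circ g\circ\rho_1(t)\circ\rho_1(0)^{-1}\circ\alpha_2^{-1}
=\alpha_1^{-1}\circ\bigl(g\,\nu(t)^{-1}g^{-1}\bigr)\circ\nu(t)\circ\rho_1(0)^{-1}\circ\alpha_2^{-1}
\]
lies in the normal subgroup generated by $\Aut_{\p^n}(\k)$ for every $t\ne0$, while $\rho_2(0)=f$. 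So $f$ is in the Euclidean closure of that normal subgroup. No Jonqui\`eres decomposition, no induction on $n$, no Baire category, and no hypothesis on $\k$ beyond being infinite (resp.\ locally compact and nondiscrete for the Euclidean statement) is needed.

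In short, your two ``expected difficulties'' are real obstacles for the route you chose, but they disappear entirely once you use the scaling-conjugation family; that is the idea your plan is missing.
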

\begin{remark}The result is, of course, false for $n=1$, since $\PSL_2(\R)$ is a non-trivial normal strict subgroup of $\PGL_2(\R)$, closed for the Euclidean topology.
\end{remark}

When $\k$ is algebraically closed, the group $\Bir_{\p^n}(\k)$ is connected with respect to the Zariski topology \cite[Th\'eor\`eme 5.1]{Bl10}. We generalise this result as follows:

\begin{theorem}\label{Thm:Connected}
For each infinite field $\k$, each $n\ge 2$ and each $f,g\in \Bir_{\p^n}(\k)$, there is a morphism $\rho\colon \A^1\to \Bir_{\p^n}$, defined over $\k$, such that $\rho(0)=f$ and $\rho(1)=g$. In particular, the group $\Bir_{\p^n}(\k)$ is connected with respect to the Zariski topology $($this second property is also true for $n=1$, even if the first is false$)$.

For each $n\ge 2$, the groups $\Bir_{\p^n}(\R)$ and $\Bir_{\p^n}(\C)$ are path-connected, and thus connected, with respect to the Euclidean topology.
\end{theorem}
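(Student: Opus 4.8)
\emph{Setup and reduction.} The plan is to reduce everything to joining an arbitrary element to the identity. Because composition and inversion on $\Bir_{\p^n}$ are morphisms for its ind-group structure \cite{BF13}, left and right translations by a fixed element are automorphisms of this structure; hence it suffices to produce, for each $h\in\Bir_{\p^n}(\k)$, a morphism $\rho\colon\A^1\to\Bir_{\p^n}$ defined over $\k$ with $\rho(0)=\id$ and $\rho(1)=h$, and then apply this to $h=g\circ f^{-1}$ and replace $\rho$ by $t\mapsto\rho(t)\circ f$. Let $S\subseteq\Bir_{\p^n}(\k)$ denote the set of such $h$. If $\gamma_1,\dots,\gamma_m\colon\A^1\to\Bir_{\p^n}$ are morphisms with $\gamma_i(0)=\id$, then $(t_1,\dots,t_m)\mapsto\gamma_1(t_1)\circ\cdots\circ\gamma_m(t_m)$ is a morphism $\A^m\to\Bir_{\p^n}$ whose restriction to the diagonal line $t_1=\dots=t_m$ joins $\id$ to $\gamma_1(1)\circ\cdots\circ\gamma_m(1)$; together with the facts that $t\mapsto\gamma(t)^{-1}$ and $t\mapsto g\,\gamma(t)\,g^{-1}$ are morphisms, this shows that $S$ is a normal subgroup. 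More generally, if $V$ is an irreducible $\k$-variety any two of whose $\k$-points are joined by a morphism $\A^1\to V$, and $\psi\colon V\to\Bir_{\p^n}$ is a morphism, then $\psi(V(\k))$ lies in a single left coset of $S$. So I must cover $\Bir_{\p^n}(\k)$ by such families through $\id$.

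\emph{Elementary building blocks.} I would first place into $S$ all translations of an affine chart $\A^n\subset\p^n$ (rescale the translation vector), all root subgroups of $\PGL_{n+1}$, hence $\SL_{n+1}(\k)\subseteq S$, and then all of $\PGL_{n+1}(\k)$ --- the classes outside the image of $\SL_{n+1}(\k)$ being reached by suitable one-parameter families, where $n\ge2$ is essential (for $n=1$ and $\k=\R$ one cannot even reach $[\mathrm{diag}(1,-1)]$, and $\PGL_2(\R)$ is genuinely not $\A^1$-connected). Next I would put the standard quadratic transformation, and low-complexity birational maps, into $S$ by exhibiting explicit families in which the base configuration degenerates. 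For instance $\sigma_2$ is conjugate by an element of $\PGL_3(\k)$ to the quadratic involution $[Z^2-XY:X(Z-Y):Y(Z-X)]$, whose net of conics passes through $[1:0:0],[0:1:0],[1:1:1]$; moving the third base point to $[1:1:t]$ gives the family $\tau_t=[Z^2-t^2XY:X(Z-tY):Y(Z-tX)]$, which is a morphism $\A^1\to\Bir_{\p^2}$ (for $t\neq0$ the three base points are distinct and non-collinear, so $\tau_t$ is a quadratic Cremona map, and for $t=0$ they become collinear so $\tau_0=[Z^2:XZ:YZ]=[Z:X:Y]\in\PGL_3(\k)\subseteq S$); hence $\sigma_2\in S$. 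Note that the crude degeneration $t\mapsto\delta_t^{-1}h\delta_t$ by a shrinking diagonal does \emph{not} work for general $h$, since its limit at $t=0$ is a constant map.

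\emph{From building blocks to a general $h$; the obstacle.} Since $S$ is a subgroup, it remains to join an arbitrary $h$ by a morphism $\A^1\to\Bir_{\p^n}$ to the subgroup generated by the building blocks. For $n=2$ this is provided by the Noether--Castelnuovo theorem over $\bar\k$ together with its refinements over non-closed fields (Iskovskikh and successors): one reduces the degree by composing with a suitable quadratic in $S$, so $h\in S\iff\sigma\circ h\in S$, and descending induction on $\deg h$ ends in $\PGL_3(\k)\subseteq S$, whence $S=\Bir_{\p^2}(\k)$. For $n\ge3$ there is no presentation of $\Bir_{\p^n}(\k)$ and Noether's theorem fails, so neither generators nor degree reduction are available; instead one must construct \emph{directly}, for each $h$, an explicit one-parameter family $\A^1\to\Bir_{\p^n}$ specializing to an element of $S$ at $t=0$ and to $h$ at $t=1$, the delicate point being to keep every intermediate member birational (all the obvious interpolations between $\id$ and $h$ break birationality at some parameter value). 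I expect this construction to be the technical heart of the proof and its main obstacle.

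\emph{Euclidean topology, and $n=1$.} For $\k\in\{\R,\C\}$ a morphism $\A^1\to\Bir_{\p^n}$ induces a continuous map $\A^1(\k)\to\Bir_{\p^n}(\k)$ for the Euclidean topologies \cite{BF13}; restricting the $\rho$ joining $f$ to $g$ to the real segment $[0,1]\subset\A^1(\k)$ yields a continuous path, so $\Bir_{\p^n}(\R)$ and $\Bir_{\p^n}(\C)$ are path-connected. Finally, for $n=1$ the first statement fails, but $\Bir_{\p^1}(\k)=\PGL_2(\k)$ is Zariski-dense in the irreducible variety $\PGL_2$, hence is connected in the Zariski topology.
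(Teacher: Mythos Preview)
Your setup is correct: the set $S$ of elements joinable to $\id$ by a $\k$-morphism $\A^1\to\Bir_{\p^n}$ is a normal subgroup, and the Euclidean and $n=1$ paragraphs are fine. But the heart of the argument is missing, and you have in fact dismissed precisely the construction that resolves it.

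You write that the degeneration $t\mapsto\theta_t^{-1}h\theta_t$ by a shrinking diagonal ``does not work for general $h$, since its limit at $t=0$ is a constant map''. This is only true if $h$ does not fix the centre of the scaling. The point you are missing is that one may always \emph{arrange} this: choose a $\k$-point $p$ at which $h$ is a local isomorphism (such points exist because $\k$ is infinite), pick $\alpha\in\Aut_{\p^n}(\k)$ with $\alpha(h(p))=p$, and apply the scaling conjugation to $g=\alpha\circ h$. In the affine chart centred at $p$ one has $g(x)=Mx+(\text{higher order})$ with $M\in\GL_n(\k)$, and then
\[
\theta_t^{-1}\circ g\circ\theta_t\colon x\longmapsto Mx+t\cdot(\text{higher order}),
\]
which is a $\k$-morphism $\A^1\to\Bir_{\p^n}$ whose value at $t=0$ is the derivative $D_p(g)\in\Aut_{\p^n}(\k)$, not a constant map. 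Composing on the right by $D_p(g)^{-1}$ shows $g\cdot D_p(g)^{-1}\in S$; once $\Aut_{\p^n}(\k)\subset S$ is established, this gives $h\in S$. This single construction handles every $h$ in every dimension $n\ge 2$, with no appeal to Noether--Castelnuovo or to any presentation of $\Bir_{\p^n}(\k)$; your detour through generators for $n=2$ is unnecessary, and the ``obstacle'' you anticipate for $n\ge 3$ does not exist.

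A second, smaller gap: you assert that ``the classes outside the image of $\SL_{n+1}(\k)$'' are reached by ``suitable one-parameter families'', but you do not produce one. A morphism $\A^1\to\PGL_{n+1}$ lifts to $\GL_{n+1}$, hence has constant determinant class, so no family \emph{inside} $\PGL_{n+1}$ will do; one must leave $\Aut_{\p^n}$. The paper exhibits, for each $\lambda\in\k^*$, an explicit birational (non-linear) map $g_\lambda\in\Bir_{\p^n}(\k)$ with two distinct fixed $\k$-points whose respective linearisations, via the scaling trick above, land in different $\PSL_{n+1}(\k)$-cosets; comparing them forces the diagonal element $[x_0:\dots:x_n]\mapsto[\lambda x_0:x_1:\dots:x_n]$ into $S$ for every $\lambda$, hence $\PGL_{n+1}(\k)\subset S$. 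Without such a construction this step is not justified.
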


The article is organised as follows: 

Section~\ref{Section:Reminders} gives some preliminary results, many of them being classical. In \S\ref{Section:ZarTopCremona}, we recall the notion of families of birational maps, and the topology induced by these ones, for algebraic varieties defined over an algebraically closed field. The natural generalisation to the case when the field is not algebraically closed is provided in \S\ref{Subsec:Notalgclosed}. We then recall, in \S\ref{Subsec:Hd}, the constructions of \cite{BF13} that yield parametrising spaces for birational maps of $\p^n$ of bounded degree. The Euclidean topology on $\Bir(\p^n)$ is then explained in \S\ref{Subsec:Euclideantop}. The last part of the section, \S\ref{Subsec:Projlingroups}, gives some basic results on the structure of $\PGL_n(K)$ and $\PSL_n(K)$, viewed as abstract groups or endowed with the Zariski topology.

The proofs of the theorems are given in Section~\ref{Sec:Results}. We first explain a simple construction of families of conjugates of a birational map corresponding to a local linearisation around a fixed point ($\S\ref{Subssec:Fixed}$). This one is used in \S\ref{Subsec:Closednormal} to study closed normal subgroups of the Cremona groups and to give the proof of Theorems~\ref{TheoremZariski} and~\ref{TheoremEuclidean}. The main steps consist in proving that non-trivial closed normal subgroups contain non-trivial elements of $\Aut_{\p^n}(\k)$ (Proposition~\ref{Prop:LimitconjugatesNormal}) and to show that the normal subgroup generated by $\Aut_{\p^n}(\k)$ is dense (Proposition~\ref{Prop:TheProp}).
 The construction of $\S\ref{Subssec:Fixed}$ is then used in \S\ref{Subsec:Connected} to study the connectedness, and prove Theorem~\ref{Thm:Connected}.

\section{Preliminaries}
\label{Section:Reminders}
\subsection{The families of birational maps and the Zariski topology induced}
\label{Section:ZarTopCremona}
In \cite{De}, M.~Demazure introduced the following functor (that he called $\mathrm{Psaut}$, for pseudo-automorphisms, the name he gave to birational transformations):
\begin{definition}\label{Defi:BirX}
Let $\kk$ be an algebraically closed field, $X$ be an irreduible algebraic variety and $A$ be a noetherian scheme, both defined over $\kk$. We define
\[\begin{array}{lll}
\Bir_X(A)&=&\left\{\begin{array}{l}
A\text{-birational transformations of }A\times X\text{ inducing an}\\
\text{ isomorphism }U\to V,\text{ where }U,V\text{ are open subsets}\\
\text{ of }A\times X,\text{ whose projections on }A\text{ are surjective }\end{array}\right\},\vspace{0.2cm}\\
\Aut_X(A)&=&\left\{\begin{array}{l}
A\text{-automorphisms of }A\times X\end{array}\right\}=\Bir_X(A)\cap \Aut(A\times X).\end{array}\]
\end{definition}
\begin{remark}
When $A=\mathrm{Spec}(\kk)$, we see that $\Bir_X(A)$ corresponds to the group of birational transformations of $X$ defined over $\kk$, that we will write $\Bir_X(\kk)$. Similarly, $\Aut_X(\kk)$ corresponds to the group of automorphisms of $X$ defined over $\kk$.
\end{remark}

Definition~\ref{Defi:BirX} implicitly gives rise to the following notion of families, or morphisms $A\to \Bir_X(\kk)$ (as in \cite{Se,Bl10,BF13}):
\begin{definition}\label{Defi:Morphisms}
Taking $A,X$ as above, an element $f\in \Bir_X(A)$ and a $\kk$-point $a\in A(\kk)$, we obtain an element $f_a\in \Bir_X(\kk)$ given by $x\dasharrow p_2(f(a,x))$, where $p_2\colon A\times X\to X$ is the second projection. 

The map $a\mapsto f_a$ represents a map from $A$ $($more precisely from the $A(\kk)$-points of $A)$ to $\Bir_X(\kk)$, and will be called a \emph{$\kk$-morphism} (or morphism defined over $\kk$) from $A$ to $\Bir_X$. If moreover $f\in \Aut_X(A)$, then $f$ also yields a morphism from $A$ to $\Aut_X$.
\end{definition}

\begin{remark}
If $X,Y$ are two irreducible algebraic varieties and $\psi\colon X\dasharrow Y$ is a birational map (all of them defined over an algebraically closed field $\kk$), the two functors $\Bir_X$ and $\Bir_Y$ are isomorphic, via $\psi$. In other words, morphisms $A\to \Bir_X$ corresponds, via $\psi$, to morphisms $A\to \Bir_Y$. 
The same holds with $\Aut_X$ and $\Aut_Y$, if $\psi$ is an isomorphism.
\end{remark}

Even if $\Bir_X$ is not representable by an algebraic variety or an ind-algebraic variety in general \cite{BF13}, we can define a topology on the group $\Bir_X(\kk)$, given by this functor. This topology was called \emph{Zariski topology} by J.-P. Serre in \cite{Se}:

\begin{definition}  \label{defi: Zariski topology}
Let $X$ be an irreducible algebraic variety defined over $\kk$. A subset~$F\subseteq \Bir_X(\kk)$ is \emph{closed in the Zariski topology}
if for any algebraic variety~$A$ and any $\kk$-morphism~$A\to \Bir_X$ the preimage of~$F$ in $A(\kk)$ is closed.
\end{definition}

\begin{remark}
In this definition, one can of course replace ``any algebraic variety $A$'' with ``any \emph{irreducible} algebraic variety $A$''.
\end{remark}
Endowed with this topology, $\Bir_{\p^n}(\kk)$ is connected, for each $n\ge 1$, and $\Bir_{\p^2}(\kk)$ is topologically simple
for each algebraically closed field $\kk$
\cite{Bl10}.

\subsection{The case where the base field is not algebraically closed}
\label{Subsec:Notalgclosed}
Similarly as for algebraic varieties defined over a non-algebraically closed field $\k$, one can define morphisms defined over $\k$:

\begin{definition}
Let $\k$ be a field, $X$ be a geometrically irreducible (irreducible over the algebraic closure $\kk$ of $\k$) algebraic variety and $A$ be an algebraic variety, both defined over $\k$.

A \emph{$\k$-morphism $A\to \Bir_X$} is a $\kk$-morphism $A\to \Bir_X$ (in the sense of Definition~\ref{Defi:Morphisms}), which is \emph{defined over $\k$}. This latter means that the corresponding $A$-birational map of $A\times X$ is defined over $\k$.
\end{definition}

\begin{remark}
As for morphisms of algebraic varieties, for any field extension $\k\subset \k'$, any $\k$-morphism $A\to \Bir_X$ is also a $\k'$-morphism, and thus yields a map $A(\k')\to \Bir_X(\k')$.
\end{remark}
 We then obtain a natural Zariski topology on $\Bir_X(\k)$, similar to the one of algebraic varieties defined over $\k$:

\begin{definition}  \label{defi: Zariski topologyk}
Let $X$ be an irreducible algebraic variety defined over $\k$. A subset~$F\subseteq \Bir_X(\k)$ is \emph{closed in the Zariski topology}
if for any $\k$-algebraic variety~$A$ and any $\k$-morphism~$A\to \Bir_X$ the preimage of~$F$ in $A(\k)$ is closed.
\end{definition}

Let us make the following observation:
\begin{lemma}\label{Lemm:Stronger}
Let $\k$ be a field, $X$ be a geometrically irreducible algebraic variety defined over $\k$. The Zariski topology on $\Bir_X(\k)$ is finer than the topology on $\Bir_X(\k)$ induced by the Zariski topology of $\Bir_X(\kk)$, where $\kk$ is the algebraic closure of $\k$.
\end{lemma}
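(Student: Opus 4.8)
The plan is to unwind the two definitions and check that "closed for the Zariski topology induced from $\kk$" implies "closed for the Zariski topology of $\Bir_X(\k)$". So suppose $F\subseteq \Bir_X(\k)$ is closed for the topology induced by the Zariski topology of $\Bir_X(\kk)$; this means there is a closed subset $\widetilde F\subseteq \Bir_X(\kk)$ with $F=\widetilde F\cap \Bir_X(\k)$. We must show that for any $\k$-algebraic variety $A$ and any $\k$-morphism $\varphi\colon A\to \Bir_X$, the preimage $\varphi^{-1}(F)\subseteq A(\k)$ is closed. The key observation is that a $\k$-morphism $A\to\Bir_X$ is in particular a $\kk$-morphism $A\to\Bir_X$ (here $A$ is viewed over $\kk$ via base change, and the associated $A$-birational map is the one obtained by extending scalars). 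Hence, by the definition of the Zariski topology on $\Bir_X(\kk)$, the preimage $\varphi^{-1}(\widetilde F)\subseteq A(\kk)$ is closed, i.e.~it is the set of $\kk$-points of a closed subscheme $Z\subseteq A_{\kk}$.

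Next I would intersect with $\k$-points: on $\k$-points the map $\varphi$ factors through the inclusion $A(\k)\hookrightarrow A(\kk)$ and takes values in $\Bir_X(\k)$, so $\varphi^{-1}(F)=\{a\in A(\k): \varphi_a\in \widetilde F\}=\varphi^{-1}(\widetilde F)\cap A(\k)=Z(\kk)\cap A(\k)=Z(\k)$, using that $\k$-points of $A_\kk$ lying in $Z$ are exactly $\k$-points of $Z$ (equivalently $Z\cap A = Z\cap(A\times_\k\k)$ defines a closed subscheme of $A$ over $\k$, or one simply notes $Z(\k)$ is cut out in $A(\k)$ by the vanishing of the defining equations of $Z$). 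In any case $\varphi^{-1}(F)$ is cut out in $A(\k)$ by (the restriction of) the equations defining $Z$, hence is closed in $A(\k)$ with its Zariski topology. This shows every set closed in the induced topology is closed in the Zariski topology of $\Bir_X(\k)$, which is exactly the assertion that the latter is finer.

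The only subtle point, and the step I would be most careful about, is making precise that a $\k$-morphism is genuinely an instance of a $\kk$-morphism: one has to check that if $f\in\Bir_X(A)$ is defined over $\k$ in the sense of the definition preceding the lemma, then the same $f$, regarded over $\kk$ after base change $A\rightsquigarrow A_\kk$, $X\rightsquigarrow X_\kk$, still lies in $\Bir_{X_\kk}(A_\kk)$ — i.e.~that the open subsets $U,V$ of $A\times X$ with surjective projections to $A$ remain open with surjective projections after extending scalars, and that the evaluation $f_a$ for $a\in A(\k)\subseteq A(\kk)$ is the same whether computed over $\k$ or over $\kk$. These are routine base-change facts (flat base change preserves openness and surjectivity of the projection; the fibre of $A\times X\to A$ over a $\k$-point is unchanged), so once they are noted the lemma follows immediately. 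I would state this compatibility as a one-line remark and then give the short argument above.
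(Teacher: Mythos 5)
Your reduction is the same as the paper's: write $F=\widetilde F\cap\Bir_X(\k)$, view the $\k$-morphism $\varphi\colon A\to\Bir_X$ as a $\kk$-morphism, conclude that $C:=\varphi^{-1}(\widetilde F)\subseteq A(\kk)$ is closed, and identify $\varphi^{-1}(F)$ with $C\cap A(\k)$. (The point you flag as subtle is actually definitional here: in this paper a $\k$-morphism \emph{is} a $\kk$-morphism whose defining $A$-birational map is defined over $\k$, so no base-change verification is needed, and the identification $\varphi^{-1}(F)=C\cap A(\k)$ is exactly the paper's last displayed chain of equalities.) The genuine gap is in the final step, which is the actual content of the lemma: you must show that $C\cap A(\k)=Z(\kk)\cap A(\k)$ is closed in $A(\k)$ for the Zariski topology of the $\k$-variety $A$, i.e.\ that it is the set of $\k$-points of a closed subvariety \emph{defined over $\k$}. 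Your justifications do not deliver this: $Z$ is a priori only defined over $\kk$, so ``cut out in $A(\k)$ by the defining equations of $Z$'' uses equations with coefficients in $\kk$, which is not closedness for the $\k$-topology unless one already knows the two topologies on $A(\k)$ agree -- and that agreement is precisely what is at stake. The parenthetical claim that ``$Z\cap A$ defines a closed subscheme of $A$ over $\k$'' is false in general: a closed subset of $A_{\kk}$ need not descend to $\k$ (e.g.\ the point $\sqrt2\in\A^1_{\overline{\Q}}$); its trace on $A(\k)$ is still closed, but not for the reason you give. If instead you intend the topology on $A(\k)$ to be the subspace topology from $A(\kk)$, the argument becomes a tautology but proves a weaker statement than the one Definition~\ref{defi: Zariski topologyk} requires.

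The paper closes exactly this gap: it takes the Zariski closure $R$ of $C\cap A(\k)$ inside $A(\kk)$ and invokes \cite[Lemma 11.2.4]{Springer} (the closure of a set of $\k$-rational points is defined over $\k$); since $R\subseteq C$, one gets $R(\k)\subseteq C\cap A(\k)\subseteq R\cap A(\k)=R(\k)$, hence $C\cap A(\k)=R(\k)$ is closed in $A(\k)$ in the required sense. Your proof becomes correct once you replace the hand-waved descent claim by this argument (or, over a perfect field, by intersecting the Galois conjugates of $Z$; the citation to Springer is what makes it work over an arbitrary field).
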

\begin{proof}
We show that for each closed subset $F'\subset \Bir_X(\kk)$, the set $F=F'\cap \Bir_X(\k)$ is closed with respect to the Zariski topology.

To do this, we need to show that the preimage of $F$ by any $\k$-morphism~$\rho\colon A\to \Bir_X$ is closed. By definition of the Zariski topology of $\Bir_X(\kk)$, the set $C=\{a\in A(\kk)\mid \rho(a)\in F'\}$ is Zariski closed in $A(\kk)$. The closure $R$ of $C\cap A(\k)$ in $A(\kk)$ is defined over $\k$ \cite[Lemma 11.2.4]{Springer}. Since $R(\kk)\subset C(\kk)$, we have $R\cap A(\k)=R(\k)\subset  C\cap A(\k)\subset R\cap A(\k)$, so $C\cap A(\k)=R(\k)$ is closed in $A(\k)$. 

It remains to observe that the equality $F=F'\cap \Bir_X(\k)$ implies that $C\cap A(\k)=\{a\in A(\k)\mid \rho(a)\in F'\}=\{a\in A(\k)\mid \rho(a)\in F\}=\rho^{-1}(F).$
\end{proof}

\begin{lemma}\label{Lemm:DiscreteFinite}
Let $\k$ be a finite field and $X$ be an algebraic variety defined over $\k$. The Zariski topology on $\Bir_{X}(\k)$ is the discrete topology.
\end{lemma}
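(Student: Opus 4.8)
The goal is to show that when $\k$ is a finite field, the Zariski topology on $\Bir_X(\k)$ is discrete, i.e.\ every subset is closed, equivalently every singleton $\{f\}$ is closed.

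\medskip

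\textbf{Plan of proof.} To show every subset is closed it suffices to show that every singleton $\{f\}\subseteq \Bir_X(\k)$ is closed, since then arbitrary subsets are finite unions... wait, $\Bir_X(\k)$ need not be finite, but it suffices that singletons are closed and that the topology is, say, Alexandrov — actually over a finite field one argues directly: a set is closed iff its preimage under every $\k$-morphism $\rho\colon A\to \Bir_X$ is closed in $A(\k)$. The key point is that $A(\k)$ is a \emph{finite} set, since $A$ is an algebraic variety over the finite field $\k$, so $A(\k)$ has finitely many points. A finite set carries the discrete topology in its Zariski topology (it is $T_1$: points are closed, being cut out by finitely many polynomial equations, and any finite $T_1$ space is discrete). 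Hence \emph{every} subset of $A(\k)$ is closed, and therefore the preimage of \emph{any} $F\subseteq \Bir_X(\k)$ under any $\k$-morphism is automatically closed. By Definition~\ref{defi: Zariski topologyk}, every $F$ is closed, which is exactly the statement that the Zariski topology on $\Bir_X(\k)$ is discrete.

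\medskip

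\textbf{Steps in order.} First I would recall that for an algebraic variety $A$ defined over a finite field $\k$, the set of $\k$-rational points $A(\k)$ is finite: covering $A$ by finitely many affine opens $A_i\subseteq \A^{n_i}$, each $A_i(\k)$ is contained in $\k^{n_i}$, a finite set. Second, I would note that the Zariski topology on the finite set $A(\k)$ (induced from $A$) is $T_1$ — each point is closed because it is the vanishing locus of finitely many coordinate differences — and a finite $T_1$ topological space is discrete, so every subset of $A(\k)$ is closed. Third, given any subset $F\subseteq \Bir_X(\k)$ and any $\k$-morphism $\rho\colon A\to\Bir_X$, the preimage $\rho^{-1}(F)\subseteq A(\k)$ is a subset of a finite discrete space, hence closed; by Definition~\ref{defi: Zariski topologyk}, $F$ is closed. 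Since $F$ was arbitrary, the topology is discrete.

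\medskip

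\textbf{Main obstacle.} There is essentially no obstacle; the only thing to be a little careful about is the definition of ``algebraic variety over $\k$'' used in the paper — one must confirm it includes finiteness of $A(\k)$, which holds as long as $A$ is of finite type over $\k$ (covered by finitely many affine charts, each of finite type). Everything else is formal. I would therefore keep the proof to a few lines: $A(\k)$ finite $\Rightarrow$ $A(\k)$ discrete in its Zariski topology $\Rightarrow$ every preimage closed $\Rightarrow$ every subset of $\Bir_X(\k)$ closed.
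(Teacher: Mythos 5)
Your proof is correct and follows essentially the same route as the paper: the paper likewise observes that for any $\k$-morphism $\rho\colon A\to\Bir_X$ the preimage $\rho^{-1}(F)$ is a finite subset of $A(\k)$ (finite because $\k$ is finite), hence closed, so every $F\subseteq\Bir_X(\k)$ is closed. Your write-up just spells out the finiteness of $A(\k)$ and the discreteness of a finite $T_1$ space, which the paper leaves implicit.
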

\begin{proof}
Let us show that any subset $F\subset \Bir_{X}(\k)$ is closed. For this, we take a $\k$-algebraic variety~$A$, a $\k$-morphism~$\rho\colon A\to \Bir_X$, and observe that $\rho^{-1}(F)$ is finite in $A$, hence is closed.  
\end{proof}

\subsection{The varieties $H_d$}
\label{Subsec:Hd}
The following algebraic varieties are useful to study morphisms to $\Bir_{\p^n}$.
\begin{definition}\label{DefWHG}\cite[Definition 2.3]{BF13}
Let $d,n$ be positive integers.
\begin{enumerate}
\item
We define $W_d$ to be the the projective space parametrising equivalence classes of non-zero  $(n+1)$-uples $(h_0,\dots,h_n)$
of homogeneous polynomials $h_i\in \k[x_0,\dots,x_n]$ of degree $d$,
where $(h_0,\dots,h_n)$ is equivalent to $(\lambda h_0,\dots,\lambda h_n)$ for any $\lambda\in \k^{*}$.
The equivalence class of $(h_0,\dots,h_n)$ will be denoted by $[h_0:\dots:h_n]$.
\item
We define $H_d\subseteq W_d$ to be the set of elements $h=[h_0:\dots:h_n]\in W_d$
such that the rational map
$\psi_h\colon \p^n\dasharrow \p^n$ given by 
\[[x_0:\dots:x_n]\dasharrow
[h_0(x_0,\dots,x_n):\dots:h_n(x_0,\dots,x_n)]\] is birational.
We denote by $\pi_d$ the map $H_d(\k)\to \Bir_{\p^n}(\k)$ which sends $h$ onto~$\psi_h$.
\end{enumerate}
\end{definition}
\begin{proposition}\label{Prop:HdPidKMorphism}
Let $d,n$ be positive integers. Taking the above definitions, the following hold:
\begin{enumerate}
\item
The set $H_d$ is locally closed in the projective space $W_d$ and thus inherits the structure of an algebraic variety;
\item
The map $\pi_d$ corresponds to a morphism $H_d\to \Bir_{\p^n}$, defined over any field. For each field $\k$, the image of the corresponding map $H_d(\k)\to \Bir_{\p^n}(\k)$ consists of all birational maps of degree $\le d$.
\end{enumerate}
\end{proposition}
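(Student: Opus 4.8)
The plan is to treat the two assertions more or less separately. For (1) I would realise $H_d$ as the intersection of an \emph{open} piece of $W_d$ (where $\psi_h$ is merely dominant) with a \emph{closed} piece coming from the inverse map, the latter obtained as the image of a \emph{proper} projection, so as to get genuine closedness rather than mere constructibility. For (2) I would write down the evident ``universal'' formula $h\mapsto\psi_h$ as an $H_d$-birational self-map of $H_d\times\p^n$ and check it belongs to the functor of Definition~\ref{Defi:BirX}; the degree statement is then elementary bookkeeping with common factors.

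For (1): let $U_d\subseteq W_d$ be the set of $h$ for which $\psi_h$ is dominant. This is the non-vanishing locus of a polynomial in the coefficients of $h$ (one expressing that $h_1/h_0,\dots,h_n/h_0$ are algebraically independent), hence open, and clearly $H_d\subseteq U_d$. It remains to show $H_d$ is closed in $U_d$. Here I would invoke the classical bound that there is an integer $D=D(d,n)$, for instance $D=d^{n-1}$, with $\deg(\psi_h^{-1})\le D$ for every $h\in H_d$. Inside $U_d\times W_D$ consider
\[
Z=\bigl\{(h,h')\ :\ h'_i(h_0,\dots,h_n)\,x_j=h'_j(h_0,\dots,h_n)\,x_i\ \text{in}\ \k[x_0,\dots,x_n]\ \text{for all}\ i,j\bigr\},
\]
where $h'_i(h_0,\dots,h_n)$ denotes the substitution of $h_0,\dots,h_n$ into $h'_i$. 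Writing out the coefficients (in $x$) of the polynomials $h'_i(h)x_j-h'_j(h)x_i$, which are bihomogeneous in the coefficients of $h$ and of $h'$, one sees that $Z$ is closed in $U_d\times W_D$. Since $h\in U_d$ makes $\psi_h$ dominant, and since $h'\ne 0$ together with dominance forces $h'_i(h)\ne 0$ for some $i$, the defining conditions of $Z$ say exactly that $\psi_{h'}\circ\psi_h=\id$ as rational maps; as a dominant rational map admitting a left inverse is birational, $\pr_1(Z)\subseteq H_d$. Conversely, for $h\in H_d$, writing $\psi_h^{-1}$ in reduced form as a tuple of degree $e\le D$ and multiplying through by $x_0^{\,D-e}$ produces $h'\in W_D$ with $(h,h')\in Z$; hence $\pr_1(Z)=H_d$. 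Finally $W_D$ is projective, so $U_d\times W_D\to U_d$ is proper, hence maps the closed set $Z$ onto a closed subset of $U_d$. Thus $H_d$ is closed in $U_d$, and so locally closed in $W_d$, all the data above being defined over the prime field.

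For (2): the coefficients of a point $[h_0:\dots:h_n]\in W_d$ assemble into a well-defined rational map
\[
\Phi\colon W_d\times\p^n\dashrightarrow W_d\times\p^n,\qquad \bigl(h,[x]\bigr)\dashrightarrow\bigl(h,[h_0(x):\dots:h_n(x)]\bigr),
\]
well-defined because its target coordinates are homogeneous of degree $1$ in the coefficients of $h$ and of degree $d$ in $x_0,\dots,x_n$, and given by formulas over the prime field. Its restriction to a fibre $\{h\}\times\p^n$ is $\psi_h$, and its indeterminacy locus $\{(h,x):h_0(x)=\dots=h_n(x)=0\}$ is closed and meets every fibre in a proper closed subset, hence contains no fibre. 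Restricting $\Phi$ to $H_d\times\p^n$ and using that $\psi_h$ is birational for \emph{every} $h\in H_d$, a standard spreading-out argument then produces open subsets $U,V\subseteq H_d\times\p^n$, both surjecting onto $H_d$, with $\Phi\colon U\xrightarrow{\sim}V$; that is, $\Phi\in\Bir_{\p^n}(H_d)$ in the sense of Definition~\ref{Defi:BirX}, and the associated morphism of Definition~\ref{Defi:Morphisms} is $\pi_d$, all of this working over any field. For the image: $\pi_d(h)$ has degree $\le d$ by construction; conversely, if $g\in\Bir_{\p^n}(\k)$ has degree $\delta\le d$, write it in reduced form $[g_0:\dots:g_n]$ with $\deg g_i=\delta$ and put $h=[x_0^{\,d-\delta}g_0:\dots:x_0^{\,d-\delta}g_n]\in W_d(\k)$; then $\psi_h=g$, so $h\in H_d(\k)$ and $\pi_d(h)=g$.

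The main obstacle is the word ``locally'' in (1). The naive description $H_d=\pr_1\{(h,h'):\psi_{h'}\circ\psi_h=\id\}$ only exhibits $H_d$ as a constructible set, because the fibres of that projection are the sets of multiples of the reduced inverse tuple and so jump in dimension; imposing the uniform bound $D$ on $\deg\psi_h^{-1}$ replaces the parameter space for $h'$ by the \emph{projective} variety $W_D$, making the projection proper and its image closed in $U_d$. One must also check that the bilinear equations really do encode $\psi_{h'}\circ\psi_h=\id$ and not the degenerate alternative $\psi_{h'}\circ\psi_h\equiv[0:\dots:0]$, which is exactly what the restriction to the dominant locus $U_d$ rules out. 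The spreading-out step in (2) is routine but, again, relies on the defining property of $H_d$ that \emph{every} point gives a birational map, not merely the generic point of each component.
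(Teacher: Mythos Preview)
The paper's own ``proof'' is a one-line citation: \emph{Follows from} \cite[Lemma~2.4]{BF13}. You have instead supplied a self-contained argument, and in outline it reproduces what that cited lemma does: realise $H_d$ as the image, under a \emph{proper} projection from $U_d\times W_D$, of the closed incidence locus $\{\psi_{h'}\circ\psi_h=\id\}$, and then exhibit the universal formula $(h,x)\mapsto(h,\psi_h(x))$ as an element of $\Bir_{\p^n}(H_d)$.

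Two small points are worth tightening. In~(1), describing $U_d$ as the non-vanishing locus of a \emph{single} polynomial ``expressing that $h_1/h_0,\dots,h_n/h_0$ are algebraically independent'' is loose; the openness of the dominant locus is more cleanly obtained from upper semicontinuity of fibre dimension, or from the fact that $\psi_h$ is dominant iff the closure of its graph surjects onto the second $\p^n$. The crucial input you correctly identify is the existence of a uniform bound $D=D(d,n)$ on $\deg(\psi_h^{-1})$, which lets you replace an unbounded parameter space for the inverse by the \emph{projective} variety $W_D$ and thereby upgrade constructibility to closedness in $U_d$. In~(2), the phrase ``standard spreading-out argument'' is doing real work: one should say that over each irreducible component $C$ of $H_d$ the generic-fibre map $\psi_\eta$ is birational (its topological degree specialises to $1$ at every closed point of $C$, hence equals $1$), so $\Phi|_{C\times\p^n}$ is birational over $C$; the open loci where the two projections from the graph closure $\overline{\Gamma_\Phi}\subset H_d\times\p^n\times\p^n$ are isomorphisms then meet every fibre and furnish the required $U,V$. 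With those two sentences added, your proof is complete and, unlike the paper's, does not outsource the content.
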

\begin{proof}
Follows from \cite[Lemma 2.4]{BF13}.
\end{proof}

\subsection{The Euclidean topology}\label{Subsec:Euclideantop}
Suppose now that $\k$ is a local field, which is moreover locally compact and nondiscrete.

The Euclidean topology of $\Bir_{\p^n}(\k)$ described in \cite[Section 5]{BF13} is as follows: we put on $W_d(\k)\simeq \p^n(\k)$ the classical Euclidean topology, then put on $H_d(\k)\subset W_d(\k)$ the induced topology, and put on $\pi_d(H_d(\k))=\{f\in \Bir_{\p^n}(\k)\mid \deg(f)\le d\}$ the quotient topology induced by $\pi_d$. The Euclidean topology on $\Bir_{\p^n}(\k)$ is then the inductive limit topology induced by the inclusions 
$$\{f\in \Bir_{\p^n}(\k)\mid \deg(f)\le d\}\to \{f\in \Bir_{\p^n}(\k)\mid \deg(f)\le d+1\}.$$

\begin{lemma}\label{Lemm:MorphismContinuousEuclidean}
Let $\k$ be a local field $($locally trivial and nondiscrete$)$, let $A$ be an algebraic variety defined over $\k$, and let $\rho\colon A\to \Bir_{\p^n}$ be a $\k$-morphism. Then, the map 
$$A(\k)\to \Bir_{\p^n}(\k)$$
is continuous, for the Euclidean topologies.
\end{lemma}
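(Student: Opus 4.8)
The plan is to reduce everything to the already-established fact that $\rho$ factors, locally on $A$, through one of the finite-degree strata $H_d(\k)$, and that on each such stratum the relevant maps are continuous for the Euclidean topology by construction. First I would invoke Proposition~\ref{Prop:HdPidKMorphism}: since $A$ is covered by affine open sets and a $\k$-morphism to $\Bir_{\p^n}$ is by definition given by an $A$-birational map of $A\times \p^n$ whose components are polynomial in the coordinates of $A$ of bounded degree on each such affine chart, the image of $A(\k)$ under $\rho$ lies in $\{f\in\Bir_{\p^n}(\k)\mid \deg(f)\le d\}$ for some $d$ (possibly after shrinking to a chart; since continuity is local on $A(\k)$ and the Euclidean topology on $A(\k)$ makes these charts open, it suffices to treat one chart at a time). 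Concretely, on such a chart $A'$ the morphism $\rho|_{A'}$ is induced by a regular map $A'\to W_d$ landing in $H_d$, i.e.\ there is a morphism of $\k$-varieties $\alpha\colon A'\to H_d$ with $\rho|_{A'}=\pi_d\circ\alpha$.

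Next I would note that a morphism of $\k$-varieties $\alpha\colon A'\to H_d$ induces a continuous map $A'(\k)\to H_d(\k)$ for the Euclidean topologies: this is the standard fact that polynomial (hence regular) maps between $\k$-varieties are Euclidean-continuous on $\k$-points, together with the fact that the Euclidean topology on $H_d(\k)$ is, by the definition recalled in \S\ref{Subsec:Euclideantop}, the subspace topology from $W_d(\k)\simeq\p^N(\k)$ with its classical Euclidean topology. Then, again by that definition, $\pi_d\colon H_d(\k)\to\{f\in\Bir_{\p^n}(\k)\mid\deg(f)\le d\}$ is continuous because the target carries the quotient topology induced by $\pi_d$. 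Finally the inclusion $\{f\mid\deg(f)\le d\}\hookrightarrow\Bir_{\p^n}(\k)$ is continuous because $\Bir_{\p^n}(\k)$ carries the inductive limit topology along these inclusions. Composing, $A'(\k)\xrightarrow{\alpha}H_d(\k)\xrightarrow{\pi_d}\{f\mid\deg(f)\le d\}\hookrightarrow\Bir_{\p^n}(\k)$ is continuous, which is $\rho|_{A'}$; as the $A'(\k)$ cover $A(\k)$ by Euclidean-open sets, $\rho\colon A(\k)\to\Bir_{\p^n}(\k)$ is continuous.

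The main obstacle I expect is the bookkeeping in the first step: verifying that a $\k$-morphism $A\to\Bir_{\p^n}$, defined as an $A$-birational self-map of $A\times\p^n$, genuinely has locally bounded degree and that the induced map to $W_d$ is a genuine morphism of varieties landing in the locally closed subset $H_d$ — in other words, pinning down precisely the factorization $\rho|_{A'}=\pi_d\circ\alpha$ with $\alpha$ regular. This is essentially contained in \cite[Lemma 2.4]{BF13} (cited in Proposition~\ref{Prop:HdPidKMorphism}) and in the discussion of families in \S\ref{Section:ZarTopCremona}; once that factorization is in hand, the rest is a routine chain of continuity statements, each of which holds essentially by definition of the topologies involved. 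One should also be slightly careful that $A$ need not be affine or irreducible, but since continuity is a local property on the source and the target is a fixed topological group, it is enough to check it on the members of an affine open cover, which is harmless.
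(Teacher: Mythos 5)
Your proposal is correct and follows essentially the same route as the paper: factor $\rho$ locally on an affine open cover through morphisms $A_i\to H_{d_i}$ (the paper cites \cite[Lemma 2.6]{BF13} for exactly this factorization, the one ingredient you flagged as the main obstacle), then compose the Euclidean-continuous maps $A_i(\k)\to H_{d_i}(\k)\to\{f\mid\deg f\le d_i\}\hookrightarrow\Bir_{\p^n}(\k)$ and use that the $A_i(\k)$ are Euclidean-open in $A(\k)$. No gaps; the only difference is a citation detail.
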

\begin{proof}
There exists an open affine covering $(A_i)_{i\in I}$ of $A$ (with respect to the Zariski topology) such that for each $i\in I$, there exists an integer $d_i$ and a morphism of algebraic varieties $\rho_i\colon A_i\to H_{d_i}$, such that the restriction of $\rho$ to $A_i$ is $\pi_{d_i}\circ \rho_i$ \cite[Lemma 2.6]{BF13}. It follows from the construction that the $A_i,\rho_i$ can be assumed to be defined over~$\k$.

We take a subset $U\subset \Bir_{\p^n}(\k)$, open with respect to the Euclidean topology, and want to show that $\rho^{-1}(U)\subset A(\k)$ is open with respect to the Euclidean topology. As all $A_i(\k)$ are open in $A(\k)$, it suffices to show that $\rho^{-1}(U)\cap A_i(\k)$ is open in $A_i(\k)$ for each $i$. This latter follows from the fact that $\rho|_{A_i}=\pi_{d_i}\circ \rho_i$ and that both are continuous for the Euclidean topology.
\end{proof}
%
%
%
%
%
%
%

\subsection{The projective linear group}\label{Subsec:Projlingroups}
Note that $\Bir_{\p^n}(\k)$ contains the algebraic group $\Aut_{\p^n}(\k)=\PGL_{n+1}(\k)$, and that the restriction of the Zariski topology to this subgroup corresponds to the usual Zariski topology of the algebraic variety $\PGL_{n+1}(\k)$, which can be viewed as the open subset of $\p^{(n+1)^2-1}(\k)$, complement of the hypersurface given by the vanishing of the determinant.

Let us make the following two observations: 
\begin{lemma}\label{Lemm:PSLdenseZariski}
 If $\k$ is an infinite field and $n\ge 2$, then $\PSL_n(\k)$ is dense in $\PGL_n(\k)$ with respect to the Zariski topology. Moreover, every non-trivial normal subgroup of $\PGL_n(\k)$ contains $\PSL_n(\k)$. In particular, $\PGL_n(\k)$ does not contain any non-trivial normal strict subgroup, closed for the Zariski topology.
\end{lemma}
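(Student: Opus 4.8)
The plan is to first prove the density statement, then deduce the statement on normal subgroups from the classical structure theory of linear groups, and finally combine the two for the ``in particular''. For density, the key point is that $\SL_n(\k)$ is Zariski-dense in the variety $\SL_n$ whenever $\k$ is infinite: over any field every matrix of determinant $1$ is a product of elementary matrices, so $\SL_n$ is generated, as an algebraic group, by the one-parameter subgroups $U_{ij}=\{I+tE_{ij}\mid t\}$ with $i\ne j$, and each $U_{ij}\cong\A^1$ has infinite, hence Zariski-dense, set of $\k$-points. Therefore the Zariski closure of $\SL_n(\k)$ is a closed subgroup of $\SL_n$ containing every $U_{ij}$, so it equals $\SL_n$. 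Applying the surjective morphism $\pi\colon\SL_n\to\PGL_n$ and using $\overline{\pi(S)}\supseteq\pi(\overline S)$ for continuous $\pi$, we get $\overline{\PSL_n(\k)}=\overline{\pi(\SL_n(\k))}\supseteq\pi(\SL_n)=\PGL_n$, i.e.\ $\PSL_n(\k)$ is Zariski-dense in the variety $\PGL_n$. Since the Zariski topology on the set $\PGL_n(\k)$ is the one induced from the ambient variety (\S\ref{Subsec:Projlingroups}), any Zariski-closed subset of $\PGL_n(\k)$ containing $\PSL_n(\k)$ comes from a closed subvariety containing the dense subset $\PSL_n(\k)$, which is then all of $\PGL_n$; hence $\PSL_n(\k)$ is dense in $\PGL_n(\k)$.

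For the second assertion I would invoke the classical classification of the normal subgroups of $\GL_n(\k)$ over an arbitrary field (going back to Dickson; see the standard references on classical groups, e.g.\ Dieudonn\'e): for $n\ge2$ and $\k$ infinite, a normal subgroup of $\GL_n(\k)$ is either contained in the centre $\k^{*}I$ or contains $\SL_n(\k)$. Given a non-trivial $N\trianglelefteq\PGL_n(\k)$, its preimage $\tilde N\trianglelefteq\GL_n(\k)$ strictly contains $\k^{*}I$ (as $N\neq\{1\}$), hence is not contained in the centre, so $\tilde N\supseteq\SL_n(\k)$ and $N\supseteq\PSL_n(\k)$. The ``in particular'' then follows at once: a non-trivial closed normal subgroup $N\trianglelefteq\PGL_n(\k)$ contains $\PSL_n(\k)$, hence, being closed, it contains the Zariski closure of $\PSL_n(\k)$, which is $\PGL_n(\k)$; so $N=\PGL_n(\k)$ is not strict.

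The only genuinely non-routine ingredient is the classical classification of the normal subgroups of $\GL_n(\k)$ (equivalently, the simplicity of $\PSL_n(\k)$ for $n\ge2$, $\k$ infinite), which I would quote rather than reprove, the underlying commutator computations being standard. A secondary point needing a little care is the passage between the Zariski topology on the set $\PGL_n(\k)$ and that of the ambient variety — in particular that every Zariski-closed subset of $\PGL_n(\k)$ is the trace of a closed subvariety — but this is routine.
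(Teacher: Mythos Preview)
Your argument is correct, but both parts take a somewhat different route from the paper.

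For density, the paper works directly inside $\PGL_n(\k)$ and uses a single $\k$-morphism $\rho\colon\A^1\setminus\{0\}\to\PGL_n$, $t\mapsto\mathrm{diag}(t,1,\dots,1)$. Since $\rho^{-1}(\PSL_n(\k))$ contains the infinite set $\{t^n\mid t\in\k^*\}$, it is Zariski-dense in $\A^1(\k)$, so the closure of $\PSL_n(\k)$ contains $\rho(\A^1(\k)\setminus\{0\})$; as these elements exhaust the cosets of $\PSL_n(\k)$ in $\PGL_n(\k)$, density follows. This argument is tailored to the paper's Definition~\ref{defi: Zariski topologyk} of the Zariski topology via $\k$-morphisms and avoids any passage to the algebraic closure. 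Your approach---density of $\SL_n(\k)$ in the variety $\SL_n$ via the root subgroups $U_{ij}$, then pushing forward along the surjective morphism $\SL_n\to\PGL_n$---is equally valid and perhaps more conceptual, but it requires the extra step you flag at the end, namely descending from the ambient variety $\PGL_n$ to the set $\PGL_n(\k)$ with its induced topology.

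For the second assertion, the paper does not pull back to $\GL_n$; instead it observes that any non-trivial $f\in N$ has a non-trivial commutator $\alpha f\alpha^{-1}f^{-1}\in N\cap\PSL_n(\k)$ (since $\PGL_n(\k)$ has trivial centre and $\PGL_n/\PSL_n$ is abelian), and then invokes the simplicity of $\PSL_n(\k)$. Your appeal to the Dickson--Dieudonn\'e classification of normal subgroups of $\GL_n(\k)$ is logically equivalent and rests on the same underlying commutator computations, so the difference here is only in packaging.
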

\begin{proof}
$(1)$ Observe that the group homomorphism $\det\colon \GL_n(\k)\to \k^*$ yields a group homomorphism 
\[\det\colon \PGL_n(\k)\to (\k^*)/\{f^n\mid f\in \k^*\},\]
whose kernel is the group $\PSL_n(\k)$. We consider the morphism 
\[\begin{array}{ccc}
\rho\colon \A^1(\k)\setminus \{0\} &\to &\PGL_n(\k)\\
t & \mapsto & \left(\begin{array}{cc} t & 0\\
0 & I\end{array}\right)\end{array}\]
where $I$ is the identity matrix of size $(n-1)\times (n-1)$, and observe that $\rho^{-1}(\PSL_n(\k))$ contains $\{t^n\mid t \in \A^1(\k)\}$, which is an infinite subset of $\A^1(\k)$, and is therefore dense in $\A^1(\k)$. The closure of $\PSL_n(\k)$ contains thus $\rho(\A^1(\k)\setminus\{0\})$. As every element of $\PGL_n(\k)$ is equal to some $\rho(t)$ modulo $\PSL_n(\k)$, we obtain that $\PSL_n(\k)$ is dense in $\PGL_n(\k)$.

$(2)$ Let $N\subset \PGL_n(\k)$ be a normal subgroup with $N\not=\{\id\}$, and let $f\in N$ be a non-trivial element. Since the center of $\PGL_n(\k)$ is trivial, one can replace $f$ with $\alpha f \alpha^{-1} f^{-1}$, where $\alpha\in \PGL_n(\k)$ does not commute with $f$, and assume that $f\in N\cap \PSL_n(\k)$. Then, as $\PSL_n(\k)$ is a simple group \cite[Chapitre II, $\S 2$]{bib:Dieudonne}, we obtain $\PSL_n(\k)\subset N$.  This implies,  that $\PGL_n(\k)$ does not contain any non-trivial normal strict subgroup which is closed for the Zariski topology. 
\end{proof}
\begin{remark} Lemma~\ref{Lemm:PSLdenseZariski} does not work for the Euclidean topology. For instance, for each $n\ge 1$, the group $\PSL_{2n}(\R)$ is a normal strict subgroup of  $\PGL_{2n}(\R)$, that is closed for the Euclidean topology.
\end{remark}
\begin{lemma}\label{Lemm:PGL2Finitefield}
Let $\k$ be a finite field. 

\begin{enumerate}
\item\label{PGL2PSL2}
$\PGL_2(\k)=\PSL_2(\k)$ if and only if $\mathrm{char}(\k)=2$;
\item\label{PGL2simple}
$\PGL_2(\k)$ is a simple group if and only if $\k=\F_{2^a}$, $a\ge 2$.
\end{enumerate} 
\end{lemma}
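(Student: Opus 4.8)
The plan is to reduce both statements to the classical structure theory of $\PSL_2$ over a finite field. First I would record the short exact sequence
\[
1\longrightarrow \PSL_2(\k)\longrightarrow \PGL_2(\k)\xrightarrow{\ \overline{\det}\ }\k^*/(\k^*)^2\longrightarrow 1,
\]
obtained by descending the determinant $\GL_2(\k)\to\k^*$ to $\PGL_2(\k)=\GL_2(\k)/\k^*$: scaling a representative matrix by $\lambda\in\k^*$ scales its determinant by $\lambda^2$, so $\overline{\det}$ is well defined, it is obviously surjective, and its kernel consists precisely of those elements of $\PGL_2(\k)$ admitting a representative of determinant $1$, i.e.\ of $\PSL_2(\k)$.

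For the first statement, write $q=\lvert\k\rvert$. The group $\k^*$ is cyclic of order $q-1$, so $x\mapsto x^2$ is surjective on $\k^*$ (equivalently $\k^*/(\k^*)^2$ is trivial) exactly when $q-1$ is odd, that is, when $q$ is even, that is, when $\mathrm{char}(\k)=2$. By the exact sequence above this happens if and only if $\PGL_2(\k)=\PSL_2(\k)$.

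For the second statement I would split into two cases. If $\mathrm{char}(\k)\ne 2$, then by the first statement $\PSL_2(\k)$ is a normal subgroup of index $2$ in $\PGL_2(\k)$; it is proper, and it is non-trivial since $\k$ being a field forces $q\ge 3$ (one has $\lvert\PSL_2(\F_3)\rvert=12$), so $\PGL_2(\k)$ is not simple. If $\mathrm{char}(\k)=2$, then $q=2^a$ and $\PGL_2(\k)=\PSL_2(\k)$; for $a=1$ one has $\PGL_2(\F_2)\cong\Sym_3$, which is not simple, whereas for $a\ge 2$ one has $q\ge 4$ and $\PSL_2(\F_q)$ is simple by the classical result \cite[Chapitre~II, \S2]{bib:Dieudonne}, hence so is $\PGL_2(\k)$. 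Combining the two cases gives that $\PGL_2(\k)$ is simple if and only if $\k=\F_{2^a}$ with $a\ge 2$.

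The only non-elementary input is the simplicity of $\PSL_2(\F_q)$ for $q\ge 4$, which I would just quote; beyond that, the point requiring care is the bookkeeping of the small exceptional fields $\F_2$ and $\F_3$, where $\PSL_2$ itself fails to be simple and so cannot be invoked.
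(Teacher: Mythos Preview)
Your proof is correct and follows essentially the same route as the paper's: both reduce part~(1) to the question of when every element of $\k^*$ is a square and invoke the classical simplicity of $\PSL_2(\F_q)$ for $q\ge 4$ (citing Dieudonn\'e) for part~(2). The only cosmetic difference is that you phrase the squaring argument via cyclicity of $\k^*$ and an explicit short exact sequence, whereas the paper uses the equivalent observation that a self-map of a finite set is surjective iff injective.
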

\begin{proof}
\ref{PGL2PSL2}: As explained before, $\PSL_2(\k)=\PGL_2(\k)$ if and only if every element of $\k^*$ (or equivalently of $\k$) is a square. As $\k$ is finite, the group homomorphism
\[\begin{array}{rcl}
\k^*&\to & \k^{*}\\
x &\mapsto & x^2\end{array}\]
is surjective if and only if it is injective, and this latter corresponds to ask that the characteristic of $\k$ is $2$.

\ref{PGL2simple}: If $\mathrm{char}(\k)\not=2$, then $\PSL_2(\k)\subsetneq\PGL_2(\k)$ is a non-trivial normal subgroup.

If $\mathrm{char}(\k)=2$, $\PGL_2(\k)=\PSL_2(\k)$ is a simple group if and only if $\k\not=\F_2$ (\cite[Chapitre II, $\S 2$]{bib:Dieudonne}).
\end{proof}

\section{Proof of the results}\label{Sec:Results}
\subsection{The construction associated to fixed points}\label{Subssec:Fixed}
Let us explain the following simple construction that we will use often in the sequel.

\begin{example}\label{ExampleConjugates}
Let $f\in \Bir_{\p^n}(\k)$ be an element that fixes the point $p=[1:0:\dots:0]$, and that induces a local isomorphism at $p$.

In the chart $x_0=1$, we can write $f$ locally as 
\[\begin{array}{l}
x=(x_1,\dots,x_n)\dasharrow\left(\dfrac{p_{1,1}(x)+\dots+p_{1,m}(x)}{1+q_{1,1}(x)+\dots+q_{1,m}(x)},\dots,\dfrac{p_{n,1}(x)+\dots+p_{n,m}(x)}{1+q_{n,1}(x)+\dots+q_{n,m}(x)}\right),\end{array}\]
where the $p_{i,j},q_{i,j}\in \k[x_1,\dots,x_n]$ are homogeneous of degree $j$. For each $t\in \k\setminus\{0\}$, the element 
\[\theta_t\colon (x_1,\dots,x_n)\mapsto (tx_1,\dots,tx_n)\]
extends to a linear automorphism of $\p^n(\k)$ that fixes $p$. Then, the map $t\mapsto (\theta_t)^{-1} \circ f \circ \theta_t$ gives rise to a morphism $F\colon \A^1\setminus \{0\}\to \Bir_{\p^n}(\k)$ whose image contains only conjugates of $f$ by linear automorphisms.

Writing $F$ locally, we can observe that $F$ extends to a morphism $\A^1\to \Bir_{\p^n}(\k)$ such that $F(0)$ is linear. Indeed, $F(t)$ can be written locally as  follows:
 \[\begin{array}{l}
 F(t)(x)=F(t)(x_1,\dots,x_n)=\\
\left(\dfrac{p_{1,1}(x)+tp_{1,2}(x)+\dots+t^{m-1}p_{1,m}(x)}{1+tq_{1,1}(x)+\dots+t^mq_{1,m}(x)},\dots,\dfrac{p_{n,1}(x)+tp_{n,2}(x)+\dots+t^{m-1}p_{n,m}(x)}{1+tq_{n,1}(x)+\dots+t^mq_{n,m}(x)}\right),\end{array}\]
and $F(0)$ corresponds to the derivative (linear part) of $F$ at $p$, which is locally given by 
\[(x_1,\dots,x_n)\mapsto \left(p_{1,1}(x),\dots,p_{n,1}(x)\right)\]
and which is an element of $\Aut_{\p^n}(k)\subset \Bir_{\p^n}(\k)$ since $f$ was chosen to be a local isomorphism at $p$.
\end{example}

Using the example above, one can construct $\k$-morphisms $\A^1\to \Bir_{\p^n}$.

\begin{proposition}\label{Prop:Limitconjugates}
Let $\k$ be a field, $n\ge 1$, let $g\in \Bir_{\p^n}(\k)$ and $p\in \p^n(\k)$ be a point such that $g$ induces a local isomorphism at $p$ and fixes $p$.
 Then there exist $\k$-morphisms $\nu\colon \A^1\setminus \{0\}\to \Aut_{\p^n}$ and $\rho\colon \A^1\to \Bir_{\p^n}$ such that the following hold:
 
\begin{enumerate}
\item
For each field extension $\k\subset \k'$ and each $t\in \A^1(\k')\setminus\{0\}$, we have \[\rho(t)=\nu(t)^{-1}\circ g\circ \nu(t).\]
Moreover, $\nu(1)=\id$, so $\rho(1)=g$.
\item
The element $\rho(0)$ belongs to $\Aut_{\p^n}(\k)$. It is the identity if and only if the action of $g$ on the tangent space $T_p(\p^2)$ is trivial. 
\end{enumerate}
\end{proposition}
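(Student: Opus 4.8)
The plan is to reduce to the special point $p_0=[1:0:\dots:0]$ treated in Example~\ref{ExampleConjugates} and then transport the construction by a linear automorphism. Since $p$ is a $\k$-point and $\PGL_{n+1}(\k)$ acts transitively on $\p^n(\k)$, I would first pick $\alpha\in\Aut_{\p^n}(\k)=\PGL_{n+1}(\k)$ with $\alpha(p)=p_0$ and set $f:=\alpha g\alpha^{-1}\in\Bir_{\p^n}(\k)$; then $f$ fixes $p_0$ and induces a local isomorphism there. Applying Example~\ref{ExampleConjugates} to $f$ gives the linear automorphisms $\theta_t\colon[x_0:x_1:\dots:x_n]\mapsto[x_0:tx_1:\dots:tx_n]$, which for $t\neq 0$ form a $\k$-morphism $\theta\colon\A^1\setminus\{0\}\to\Aut_{\p^n}$ (the matrix family $\mathrm{diag}(1,t,\dots,t)$) with $\theta_1=\id$, together with a $\k$-morphism $F\colon\A^1\to\Bir_{\p^n}$ such that $F(t)=\theta_t^{-1}\circ f\circ\theta_t$ for $t\neq 0$ and $F(0)\in\Aut_{\p^n}(\k)$ is, in the chart $x_0=1$, the linear part $x\mapsto(p_{1,1}(x),\dots,p_{n,1}(x))$ of $f$ at $p_0$. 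I would then define $\nu(t):=\alpha^{-1}\circ\theta_t\circ\alpha$ and $\rho(t):=\alpha^{-1}\circ F(t)\circ\alpha$. Since pre- and post-composing a $\k$-morphism $A\to\Bir_{\p^n}$ by a fixed element of $\Aut_{\p^n}(\k)$ again yields a $\k$-morphism (at the level of the associated $A$-birational map of $A\times\p^n$ one composes with $\id_A\times\alpha^{\pm1}$), and this operation preserves $\Aut_{\p^n}$, the maps $\nu\colon\A^1\setminus\{0\}\to\Aut_{\p^n}$ and $\rho\colon\A^1\to\Bir_{\p^n}$ are $\k$-morphisms.

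Next I would check the two assertions. For (1): when $t\neq 0$ one computes $\nu(t)^{-1}\circ g\circ\nu(t)=(\alpha^{-1}\theta_t^{-1}\alpha)\circ(\alpha^{-1}f\alpha)\circ(\alpha^{-1}\theta_t\alpha)=\alpha^{-1}(\theta_t^{-1}\circ f\circ\theta_t)\alpha=\alpha^{-1}F(t)\alpha=\rho(t)$; as $\nu$ and $\rho$ are $\k$-morphisms, this identity can be established once as an equality of $(\A^1\setminus\{0\})$-birational maps defined over $\k$, and it then persists after specialising at any point $t\in\A^1(\k')\setminus\{0\}$ for every field extension $\k\subset\k'$. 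Also $\nu(1)=\alpha^{-1}\theta_1\alpha=\id$, so $\rho(1)=\nu(1)^{-1}g\nu(1)=g$. For (2): $\rho(0)=\alpha^{-1}F(0)\alpha\in\Aut_{\p^n}(\k)$. Up to the chart identification $F(0)$ is the differential $d_{p_0}f$ acting on $T_{p_0}(\p^n)$, and since $f=\alpha g\alpha^{-1}$ the isomorphism $d_p\alpha\colon T_p(\p^n)\to T_{p_0}(\p^n)$ conjugates $d_pg$ to $d_{p_0}f$; hence $\rho(0)=\id$ exactly when $d_{p_0}f=\id$, i.e.\ exactly when $g$ acts trivially on $T_p(\p^n)$.

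The only step that requires real care — and where the local-isomorphism hypothesis is genuinely used — is the assertion, taken from Example~\ref{ExampleConjugates}, that the coordinate-wise formula for $F$ defines an honest morphism $\A^1\to\Bir_{\p^n}$ (and not merely a family of maps). Writing $f$ in the chart $x_0=1$ as $x\dasharrow(N_i(x)/D_i(x))_{1\le i\le n}$, the fixed-point condition forces $N_i(0)=0$, and since $f$ is defined at $p_0$ one may normalise $D_i(0)=1$; then $F(t)$ is, in that chart, $x\dasharrow(N_i(t,x)/D_i(t,x))_i$ with coefficients polynomial in $t$ (the displayed formula of Example~\ref{ExampleConjugates}). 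Homogenising with respect to $x_0$ to a common degree $e\ge\deg g$ and clearing denominators produces a morphism $\A^1\to W_e$ whose value at $t\neq 0$ is the birational map $\theta_t^{-1}\circ f\circ\theta_t$ and whose value at $t=0$ is the linear map $[x_0:p_{1,1}(x):\dots:p_{n,1}(x)]$, invertible precisely because the differential of $f$ at $p_0$ is. Thus the morphism lands set-theoretically in $H_e$, and since $H_e$ is locally closed in $W_e$ (Proposition~\ref{Prop:HdPidKMorphism}) it factors through $H_e$; composing with $\pi_e$ gives the morphism $\A^1\to\Bir_{\p^n}$. All polynomials have coefficients in $\k$, so $\rho$ and $\nu$ are defined over $\k$ and everything is compatible with extension of scalars, which delivers (1) over an arbitrary $\k'$. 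I expect the bookkeeping in this last step, rather than any conceptual difficulty, to be the principal task.
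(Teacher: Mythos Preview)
Your proof is correct and follows essentially the same approach as the paper: conjugate to the standard fixed point $[1:0:\dots:0]$, use the scaling automorphisms $\theta_t$ from Example~\ref{ExampleConjugates}, and read off the limit at $t=0$ as the linear part. The paper compresses the conjugation step into the sentence ``conjugating by an element of $\Aut_{\p^n}(\k)$, we can assume that $p=[1:0:\dots:0]$'' and simply cites Example~\ref{ExampleConjugates} for the extension to $t=0$, whereas you spell out the transport by $\alpha$ and supply the $W_e/H_e$ factorisation argument for why $F$ is an honest $\k$-morphism on all of $\A^1$; this extra care is harmless and makes the argument self-contained.
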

\begin{proof}
Conjugating by an element of $\Aut_{\p^n}(\k)$, we can assume that $p=[1:0:\dots:0]$. We then choose $\nu$ to be given by 
\[\nu(t)\colon [x_0:x_1:\dots:x_n]\mapsto [x_0:tx_1:\dots:tx_n],\]
and define $\rho\colon \A^1\setminus \{0\}\to \Bir_{\p^n}$ by $\rho(t)=\nu(t)^{-1}\circ g\circ \nu(t)$. As it was shown in Example~\ref{ExampleConjugates}, the $\k$-morphism $\rho$ extends to a $\k$-morphism $\A^1\to \Bir_{\p^n}$ such that $\rho(0)\in \Aut_{\p^n}(\k)$. Moreover, this element is trivial if and only if the action of $g$ on the tangent space $T_p(\p^n)$ is trivial.
\end{proof}

\subsection{Closed normal subgroups of the Cremona groups}
\label{Subsec:Closednormal}
As a consequence of Proposition~\ref{Prop:Limitconjugates}, we obtain the following result:

\begin{proposition}\label{Prop:LimitconjugatesNormal}
Let $\k$ be an infinite field, $n\ge 1$ and $N\subset \Bir_{\p^n}(\k)$ be a normal subgroup, with $N\not=\{\id\}$. If $N$ is closed with respect to the Zariski topology or to the Euclidean topology $($if $\k$ is a local field$)$, then $N\cap \Aut_{\p^n}(\k)$ is not the trivial group.
\end{proposition}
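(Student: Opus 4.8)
The plan is to start from a non-trivial $g\in N$ and produce, using the normality and closedness of $N$, a non-trivial element of $\Aut_{\p^n}(\k)$ inside $N$. The natural tool is Proposition~\ref{Prop:Limitconjugates}: if we can find a point $p\in\p^n(\k)$ fixed by some conjugate $h g h^{-1}$ (with $h\in\Bir_{\p^n}(\k)$, so that $hgh^{-1}\in N$) at which this conjugate is a local isomorphism, then the proposition gives a $\k$-morphism $\rho\colon\A^1\to\Bir_{\p^n}$ with $\rho(t)\in N$ for all $t\neq 0$ (each $\rho(t)$ being a conjugate of an element of $N$, hence in $N$) and $\rho(0)\in\Aut_{\p^n}(\k)$. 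Since $N$ is closed and $\rho$ is a $\k$-morphism, the preimage $\rho^{-1}(N)$ is closed in $\A^1(\k)$; it contains $\A^1(\k)\setminus\{0\}$, which is dense because $\k$ is infinite (and, for the Euclidean case, by Lemma~\ref{Lemm:MorphismContinuousEuclidean} the map is also continuous for the Euclidean topology, and $\A^1(\k)\setminus\{0\}$ is Euclidean-dense in $\A^1(\k)$ as well). Hence $\rho(0)\in N\cap\Aut_{\p^n}(\k)$. The remaining issue is to guarantee that $\rho(0)\neq\id$, equivalently that the chosen conjugate acts non-trivially on the tangent space $T_p(\p^n)$.

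First I would reduce to the case $n=1$ being handled directly: if $n=1$ then $\Bir_{\p^1}(\k)=\PGL_2(\k)=\Aut_{\p^1}(\k)$, so $N\subset\Aut_{\p^1}(\k)$ and there is nothing to prove. So assume $n\ge 2$. Next I would find a suitable fixed point. Pick a non-trivial $g\in N$. If $g\in\Aut_{\p^n}(\k)$ we are already done, so assume $g\notin\Aut_{\p^n}(\k)$, in particular $\deg g\ge 2$ and $g$ is not everywhere defined. The key point is that the indeterminacy behaviour forces $g$ (or a conjugate of $g$ we are free to replace it by) to have a fixed point with non-trivial tangent action. Here is the mechanism I would use: choose a $\k$-point $p$ in the (nonempty, since $\k$ infinite) open locus where $g$ is a local isomorphism, and set $q=g(p)$. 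If $q=p$ and the tangent action of $g$ at $p$ is non-trivial, apply the above and finish. Otherwise, I would conjugate to move $p$ and $q$ and analyse the possible degeneracies: if the tangent map of $g$ at every $\k$-point of its regular locus were the identity, then in local coordinates around each such point $g$ would be $\mathrm{id}+(\text{higher order})$; I would argue this is incompatible with $g\notin\Aut_{\p^n}(\k)$ by inspecting the behaviour along the contracted hypersurface of $g$, or more simply by composing $g$ with a well-chosen translation/diagonal automorphism $\alpha$ so that $\alpha g$ has an eigenvalue $\neq 1$ at a fixed point; since $\alpha g\alpha'$ ranges over a coset we have freedom here.

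I expect the main obstacle to be exactly this last point: ensuring that, after replacing $g$ by a conjugate, there is a $\k$-rational fixed point at which $g$ is a local isomorphism \emph{and} the tangent action is non-trivial. The cleanest route is probably to consider the element $h=g\alpha g^{-1}\alpha^{-1}$ for a suitable $\alpha\in\Aut_{\p^n}(\k)$: this $h$ lies in $N$ (normality), and one can arrange $\alpha$ so that $h$ is a local isomorphism at a common $\k$-point $p$ fixed by $h$ with non-trivial differential — intuitively, $g$ and $\alpha$ do not commute and the commutator "remembers" a non-trivial linear part. One must check that the regular locus of $h$ still contains a $\k$-point (using that $\k$ is infinite and the relevant open sets are nonempty and defined over $\k$) and that the differential of $h$ at such a point is not the identity for a generic choice of $\alpha$ — a generic-choice argument over the infinite field $\k$, possibly passing through $\kk$ and descending via density of $\k$-points, should close this gap. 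Once a valid $p$ and conjugate are in hand, the density/closedness argument from the first paragraph delivers the conclusion.
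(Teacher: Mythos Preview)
Your strategy is exactly the paper's: reduce to $n\ge 2$, form a commutator of a non-trivial $f\in N$ with some $\alpha\in\Aut_{\p^n}(\k)$, apply Proposition~\ref{Prop:Limitconjugates} at a fixed $\k$-point, and use closedness of $N$ together with density of $\A^1(\k)\setminus\{0\}$ in $\A^1(\k)$ (Zariski: $\k$ infinite; Euclidean: $\k$ nondiscrete plus Lemma~\ref{Lemm:MorphismContinuousEuclidean}) to conclude $\rho(0)\in N\cap\Aut_{\p^n}(\k)$.

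The step you flagged as the obstacle --- producing a commutator with a $\k$-rational fixed point at which it is a local isomorphism \emph{and} has non-trivial differential --- is where your proposal stays vague (``generic choice of $\alpha$\ldots should close this gap''). The paper handles it directly and without any genericity argument: choose $p$ in the regular locus of $f$ with $q=f(p)\neq p$ (possible since $f\neq\id$ and $\k$ is infinite), and take $\alpha\in\Aut_{\p^n}(\k)$ fixing \emph{both} $p$ and $q$. Then $g=\alpha^{-1}f^{-1}\alpha f$ automatically fixes $p$ and is a local isomorphism there, and the chain rule gives
\[
D_p(g)=D_{p}(\alpha^{-1})\circ D_{q}(f^{-1})\circ D_q(\alpha)\circ D_p(f).
\]
Because $p\neq q$, the linear maps $D_p(\alpha)$ and $D_q(\alpha)$ can be prescribed independently among automorphisms fixing $p$ and $q$, so one can always pick $\alpha$ making $D_p(g)\neq\id$. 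This removes the need for your tentative arguments about tangent maps being the identity on the whole regular locus or passing to $\kk$ and descending.
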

\begin{proof}
We can assume that $n\ge 2$, as the result is trivial for $n=1$ (in which case $\Bir_{\p^n}(\k)=\Aut_{\p^n}(\k)$) .
Let us choose a non-trivial element $f\in N$. As $f$ is a birational transformation, it induces an isomorphism $U\to V$, where $U,V\subset \p^n$ are two non-empty open subsets defined over $\k$. Since $\k$ is infinite, $U(\k)$ and $V(\k)$ are not empty, so we can find $p\in U(\k)$, and $q=f(p)\in V(\k)$. We can moreover choose $p\not=q$, since $\{p\in U\mid f(p)\not=p\}$ is open in $U$, and not empty. Let us take an element $\alpha\in \Aut_{\p^n}(\k)$ that fixes $p$ and $q$. The element $g=\alpha^{-1} f^{-1}\alpha f\in N$  fixes $p$ and is a local isomorphism at this point. We can choose $\alpha$ such that the derivative $D_p(g)$ of $g$ at this point is not trivial, since 
\[D_p(g)=D_{p}(\alpha^{-1})\circ D_{q}(f^{-1})\circ D_q(\alpha)\circ D_p(f).\]
By Proposition~\ref{Prop:Limitconjugates}, there exists a $\k$-morphism $\rho\colon \A^1\to \Bir_{\p^n}$ such that $\rho(0)\in \Aut_{\p^n}(\k)\setminus \{\id\}$ and such that $\rho(t)\in N$ for each $t\in \A^1(\k)\setminus \{0\}$. Since $N$ is closed (with respect to the Zariski or to the Euclidean topology), $\rho^{-1}(N)\subset \A^1(\k)$ is closed (with respect to the Zariski or to the Euclidean topology respectively, see Lemma~\ref{Lemm:MorphismContinuousEuclidean} in the latter case) and contains $\A^1(\k)\setminus \{0\}$. For the Zariski topology, one uses the fact that $\k$ is infinite to get $\rho^{-1}(N)=\A^1(\k)$. For the Euclidean topology, one uses the fact that $\k$ is nondiscrete to get the same result. In each case, we find that $\rho(0)\in N\cap \Aut_{\p^n}(\k)$.
\end{proof}

\begin{lemma}\label{Lem:Autinnormal}
Let $\k$ be an infinite field, $n\geq 2$ an integer and $N\subset \Bir_{\p^n}(\k)$ be a normal subgroup, with $N\cap \Aut_{\p^n}(\k)\not=\{\id\}$. Then $\PGL_{n+1}(\k)=\Aut_{\p^n}(\k)\subset N.$
\end{lemma}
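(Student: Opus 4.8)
The plan is to show that the normal subgroup $N$, which meets $\Aut_{\p^n}(\k)=\PGL_{n+1}(\k)$ nontrivially, must in fact contain all of $\PGL_{n+1}(\k)$. The starting point is that $N\cap\PGL_{n+1}(\k)$ is a nontrivial normal subgroup of $N$, but to apply Lemma~\ref{Lemm:PSLdenseZariski} we need it to be normal in $\PGL_{n+1}(\k)$ itself. This is immediate: $\PGL_{n+1}(\k)\subset\Bir_{\p^n}(\k)$ and $N$ is normal in $\Bir_{\p^n}(\k)$, so $M:=N\cap\PGL_{n+1}(\k)$ is normalised by $\PGL_{n+1}(\k)$, hence is a nontrivial normal subgroup of $\PGL_{n+1}(\k)$.

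Now I would invoke the second assertion of Lemma~\ref{Lemm:PSLdenseZariski}: since $n+1\ge 3$, every nontrivial normal subgroup of $\PGL_{n+1}(\k)$ contains $\PSL_{n+1}(\k)$. Therefore $\PSL_{n+1}(\k)\subset M\subset N$. It remains to promote this to all of $\PGL_{n+1}(\k)$, i.e.\ to kill the cokernel $\PGL_{n+1}(\k)/\PSL_{n+1}(\k)\cong \k^*/(\k^*)^{n+1}$. The point here is that we are not merely looking at $N$ as an abstract group but we may use that $N$ is normal in the much larger group $\Bir_{\p^n}(\k)$: conjugating elements of $\PGL_{n+1}(\k)$ by suitable birational maps can change them by the full automorphism group and hence reach every coset. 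Concretely, pick any $h\in\PGL_{n+1}(\k)$; I want to write $h$ as a product of $\PSL$-elements and elements known to lie in $N$. Since every element of $\PGL_{n+1}(\k)$ is, modulo $\PSL_{n+1}(\k)$, a scalar-type map such as $\rho(t)\colon[x_0:\cdots:x_n]\mapsto[tx_0:x_1:\cdots:x_n]$ (as in the proof of Lemma~\ref{Lemm:PSLdenseZariski}), it suffices to show each such $\rho(t)$ lies in $N$.

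The key trick for the last step is to use a birational conjugation to realise $\rho(t)$ as a commutator landing in $\PSL$, or more simply to exhibit $\rho(t)$ as conjugate in $\Bir_{\p^n}(\k)$ to an element already in $N$. For instance the standard quadratic involution $\sigma\colon[x_0:\cdots:x_n]\mapsto[x_0^{-1}:\cdots:x_n^{-1}]$ (written in affine coordinates as $(x_1,\dots,x_n)\mapsto(1/x_1,\dots,1/x_n)$) conjugates a diagonal torus element to its inverse, so $\sigma\,\rho(t)\,\sigma^{-1}=\rho(t)^{-1}$; then $\rho(t)^2=\rho(t)\cdot(\sigma\rho(t)^{-1}\sigma^{-1})^{-1}$... rather, the cleaner route: since $\PSL_{n+1}(\k)\subset N$ and $N$ is normal, for any $g\in\Bir_{\p^n}(\k)$ and any $s\in\PSL_{n+1}(\k)$ we have $g s g^{-1}\in N$. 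Choosing $g$ a suitable permutation-of-coordinates composed with a birational map that scales one coordinate, one checks $g s g^{-1}\in\PGL_{n+1}(\k)$ represents an arbitrary nontrivial coset of $\PSL_{n+1}(\k)$; combined with $\PSL_{n+1}(\k)\subset N$ this gives $\PGL_{n+1}(\k)\subset N$. I expect the main obstacle to be precisely this last verification --- producing an explicit birational (or even just linear plus a carefully chosen outer gadget) conjugation that moves a fixed $\PSL$-element into every desired coset of $\PSL_{n+1}(\k)$ in $\PGL_{n+1}(\k)$ --- since one must be careful that $\k^*/(\k^*)^{n+1}$ can be a nontrivial group and that conjugation in $\PGL_{n+1}$ alone preserves the determinant class, so the ``outer'' move genuinely has to come from $\Bir_{\p^n}(\k)$.
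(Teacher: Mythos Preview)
Your overall strategy matches the paper's: first use Lemma~\ref{Lemm:PSLdenseZariski} to get $\PSL_{n+1}(\k)\subset N$, then use normality of $N$ in the full birational group to reach the remaining cosets of $\PSL_{n+1}(\k)$ in $\PGL_{n+1}(\k)$. The first half is fine and essentially identical to the paper.

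The gap is exactly where you yourself flag it: you never produce the explicit birational conjugation needed for the second step. Your attempt with the standard Cremona involution $\sigma$ does not work, as you notice --- it only sends a diagonal element to its inverse, which lies in the inverse coset modulo $\PSL_{n+1}(\k)$ and gets you nothing new. Your vaguer suggestion (``a permutation of coordinates composed with a birational map that scales one coordinate'') is in the right spirit but is not a proof until you actually write down such a map and check it does what you want. Since this is precisely the nontrivial content of the lemma, the proposal as written is incomplete.

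The paper fills this gap with a concrete one-line computation. For $a\in\k^*$, the diagonal element
\[
g_a\colon [x_0:\cdots:x_n]\mapsto [x_0:ax_1:a^{-1}x_2:x_3:\cdots:x_n]
\]
lies in $\PSL_{n+1}(\k)\subset N$. Conjugating by the birational map
\[
h\colon [x_0:\cdots:x_n]\dashrightarrow \Bigl[x_0:x_1:x_2\cdot\frac{x_1}{x_0}:x_3:\cdots:x_n\Bigr]
\]
one checks directly that
\[
h\,g_a\,h^{-1}\colon [x_0:\cdots:x_n]\mapsto [x_0:ax_1:x_2:x_3:\cdots:x_n],
\]
which is the diagonal element of determinant class $a$. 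These elements hit every coset of $\PSL_{n+1}(\k)$ in $\PGL_{n+1}(\k)$, and since $h\,g_a\,h^{-1}\in N$ by normality, one concludes $\PGL_{n+1}(\k)\subset N$. This is the missing piece you should supply.
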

\begin{proof}
The group $N\cap \Aut_{\p^n}(\k)$ is a non-trivial normal subgroup of $\Aut_{\p^n}(\k)=\PGL_{n+1}(\k)$, so contains $\PSL_{n+1}(\k)$ (Lemma~\ref{Lemm:PSLdenseZariski}).

We choose $a\in \k^*$, take $g_a\in N$, $h\in \Bir_{\p^n}(\k)$ given by 
\[\begin{array}{llllll}
g_a\colon &[x_0:\dots:x_n]&\mapsto& [x_0:ax_1:\frac{1}{a} x_2:x_3:\dots:x_n]\\
h\colon&[x_0:\dots:x_n]&\dasharrow &[x_0:x_1:x_2\cdot \frac{x_1}{x_0}:x_3:\dots:x_n].\end{array}\]
Then, $g_a'=hg_ah^{-1}\in N$ is given by 
\[\begin{array}{llllll}
g_a'\colon &[x_0:\dots:x_n]&\mapsto& [x_0:ax_1:x_2:x_3:\dots:x_n].\end{array}\]
As every element of $\PGL_n(\k)$ is equal to some $g_a'$ modulo $\PSL_{n+1}(\k)$, we obtain that $\PGL_{n+1}(\k)\subset N$.
\end{proof}
\begin{proposition}\label{Prop:TheProp}
Let $\k$ be an infinite field, $n\geq 2$ an integer and consider $\Bir_{\p^n}(\k)$ endowed with the Zariski topology or the Euclidean topology $($if $\k$ is a local field$)$. Then the normal subgroup of $\Bir_{\p^n}(\k)$ generated by $\Aut_{\p^n}(\k)$ is dense in $\Bir_{\p^n}(\k)$. 

In particular, $\Bir_{\p^n}(\k)$ does not contain any non-trivial  closed normal strict subgroup. 
\end{proposition}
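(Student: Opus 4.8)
The plan is to show that the normal closure $\langle\!\langle \Aut_{\p^n}(\k)\rangle\!\rangle$ is dense, since the last assertion follows immediately: a closed normal strict subgroup $N$ would have closure $N$ itself, but any non-trivial closed normal subgroup contains $\Aut_{\p^n}(\k)$ by Proposition~\ref{Prop:LimitconjugatesNormal} together with Lemma~\ref{Lem:Autinnormal}, hence contains $\langle\!\langle \Aut_{\p^n}(\k)\rangle\!\rangle$, hence is dense, hence equals $\Bir_{\p^n}(\k)$ --- a contradiction. Actually, even more directly: the closure of $\langle\!\langle \Aut_{\p^n}(\k)\rangle\!\rangle$ is a closed normal subgroup containing a non-trivial element of $\Aut_{\p^n}(\k)$, so if we prove it is the whole group we are done, and any closed normal strict subgroup is trivial because if it were non-trivial it would contain $\Aut_{\p^n}(\k)$ and thus its normal closure, which is dense, forcing it to be everything.

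So the real content is the density of $\langle\!\langle \Aut_{\p^n}(\k)\rangle\!\rangle$. The strategy I would use: fix an arbitrary $f\in \Bir_{\p^n}(\k)$ and produce a $\k$-morphism $\rho\colon \A^1\to \Bir_{\p^n}$ with $\rho(1)=f$ and $\rho(0)\in\langle\!\langle\Aut_{\p^n}(\k)\rangle\!\rangle$ (ideally $\rho(0)\in\Aut_{\p^n}(\k)$, or even $\rho(0)=\id$), such that $\rho(t)\in f\cdot\langle\!\langle\Aut_{\p^n}(\k)\rangle\!\rangle$ for all $t$; then $f$ lies in the closure of $f\cdot\langle\!\langle\Aut_{\p^n}(\k)\rangle\!\rangle$ applied appropriately --- more precisely, one wants to connect $f$ to something in the normal closure by a curve lying entirely in the coset $f\cdot\langle\!\langle\Aut_{\p^n}(\k)\rangle\!\rangle$, forcing $f$ itself into the closure of that coset, which is the closure of the normal closure. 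The engine for this is Example~\ref{ExampleConjugates} / Proposition~\ref{Prop:Limitconjugates}: if $f$ is a local isomorphism fixing a rational point $p$, conjugating by the homotheties $\nu(t)$ gives a morphism $\rho$ with $\rho(t)=\nu(t)^{-1}f\nu(t)$, so $\rho(t)f^{-1}=\nu(t)^{-1}f\nu(t)f^{-1}$, a product of $f$-conjugates of elements of $\Aut_{\p^n}(\k)$ --- wait, rather $\rho(t)=\nu(t)^{-1}f\nu(t) = f\cdot(f^{-1}\nu(t)^{-1}f)\nu(t)$, and $(f^{-1}\nu(t)^{-1}f)\nu(t)\in\langle\!\langle\Aut_{\p^n}(\k)\rangle\!\rangle$, so the whole curve $\rho$ lies in $f\cdot\langle\!\langle\Aut_{\p^n}(\k)\rangle\!\rangle$, while $\rho(0)\in\Aut_{\p^n}(\k)\subset\langle\!\langle\Aut_{\p^n}(\k)\rangle\!\rangle$. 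Since $\k$ is infinite (resp. nondiscrete), $\A^1(\k)\setminus\{0\}$ is dense in $\A^1(\k)$ for the Zariski (resp. Euclidean) topology, and $\rho$ is continuous (Lemma~\ref{Lemm:MorphismContinuousEuclidean} in the Euclidean case), so $\rho(0)$ lies in the closure of $\rho(\A^1(\k)\setminus\{0\})\subset f\cdot\langle\!\langle\Aut_{\p^n}(\k)\rangle\!\rangle$; since also $\rho(0)\in\langle\!\langle\Aut_{\p^n}(\k)\rangle\!\rangle$ and this normal closure is a group, we deduce $f$ is in the closure of $\langle\!\langle\Aut_{\p^n}(\k)\rangle\!\rangle$.

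The main obstacle is the hypothesis of Proposition~\ref{Prop:Limitconjugates}: it needs $f$ to fix a $\k$-rational point at which it is a local isomorphism, and a general $f\in\Bir_{\p^n}(\k)$ does neither. The fix is to first replace $f$ by something suitable within its coset modulo $\langle\!\langle\Aut_{\p^n}(\k)\rangle\!\rangle$: since $\k$ is infinite there are plenty of $\k$-points in the domain of definition of $f$, pick $p$ with $f$ a local isomorphism at $p$ and set $q=f(p)$; choosing $\alpha\in\Aut_{\p^n}(\k)$ with $\alpha(q)=p$, the composite $\alpha f$ fixes $p$ and is a local isomorphism there, and $\alpha f\in\langle\!\langle\Aut_{\p^n}(\k)\rangle\!\rangle\cdot f$, so it suffices to prove $\alpha f$ lies in the closure of $\langle\!\langle\Aut_{\p^n}(\k)\rangle\!\rangle$. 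Now apply the argument of the previous paragraph to $\alpha f$ in place of $f$. One should also double-check the degenerate step where $f$ might be such that no valid $p\neq q$ exists --- but here we do not even need $p\neq q$, we only need $\alpha f$ to fix a point and be étale there, which the construction guarantees; and in the extreme case that $\alpha f$ has trivial derivative at $p$ we still get $\rho(0)=\id$, which is fine. Thus every $f$ is in the closure of the normal closure, proving density and hence topological simplicity for both topologies at once.
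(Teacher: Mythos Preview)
Your argument is correct and follows essentially the same strategy as the paper: reduce to a map $g=\alpha f$ (the paper uses $g=\alpha_1 f\alpha_2$) fixing a rational point where it is a local isomorphism, then use the homothety family of Proposition~\ref{Prop:Limitconjugates} to connect $g$ to an element of $\Aut_{\p^n}(\k)$ through conjugates, and read off that $f$ lies in the closure of the normal subgroup generated by $\Aut_{\p^n}(\k)$.

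The only difference is in the final bookkeeping. You show $\rho(0)\in\Aut_{\p^n}(\k)$ lies in the closure of the coset $(\alpha f)\cdot\langle\!\langle\Aut_{\p^n}(\k)\rangle\!\rangle$ and then translate back; this step implicitly uses that left and right multiplication by a fixed element are homeomorphisms for the Zariski topology (true, because pre/post-composing a morphism $A\to\Bir_{\p^n}$ by a fixed birational map is again a morphism, but worth stating). The paper avoids this by absorbing the translation into the morphism itself: it applies Proposition~\ref{Prop:Limitconjugates} to $g^{-1}$ to get $\rho_1$, then sets $\rho_2(t)=\alpha_1^{-1}\,g\,\rho_1(t)\,\rho_1(0)^{-1}\,\alpha_2^{-1}$, so that $\rho_2(0)=f$ on the nose while $\rho_2(t)$ visibly lies in $\langle\!\langle\Aut_{\p^n}(\k)\rangle\!\rangle$ for $t\neq 0$. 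Your observation that the case $D_p(\alpha f)=\id$ causes no trouble (one simply gets $\rho(0)=\id$) is correct; the paper's insistence on a non-trivial derivative is not actually needed for this proposition.
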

\begin{proof}$(1)$
Let $f\in\Bir_{\p^n}(\k)$, $f\neq\id$. It induces an isomorphism $U\rightarrow V$, where $U,V\subset\p^n$ are two non-empty open subsets, defined over $\k$. Since $\k$ is infinite, we can find $p\in U(\k)$. There exist $\alpha_1,\alpha_2\in\Aut_{\p^n}(\k)$ such that $g:=\alpha_1f\alpha_2$ fixes $p$, is a local isomorphism at this point and that $D_p(g)$ is not trivial. By Proposition~\ref{Prop:Limitconjugates}, there exist $\k$-morphisms $\nu\colon\A^1\setminus\{0\}\rightarrow\Aut_{\p^n}(\k)$ and $\rho_1\colon\A^1\rightarrow\Bir_{\p^n}(\k)$ such that $\rho_1(t)=\nu(t)^{-1}\circ g^{-1}\circ\nu(t)$ for each $t\in \A^1(\k)\setminus \{0\}$ and $\rho_1(0)\in\Aut_{\p^n}(\k)$. We define a $\k$-morphism
\[\rho_2\colon\A^1\rightarrow\Bir_{\p^n}(\k),\quad \rho_2(t)=\alpha_1^{-1}\circ g\circ \rho_1(t)\circ\rho_1(0)^{-1}\circ\alpha_2^{-1}.\]
Since $\alpha_1,\alpha_2,\rho_1(0),\nu(t)\in\Aut_{\p^n}(\k)$ for all $t\in\A^1\setminus\{0\}$, the map 
\[\rho_2(t)=\alpha_1^{-1}\circ\left(g\circ\nu(t)^{-1}\circ g^{-1}\right)\circ\nu(t)\circ\rho_1(0)^{-1}\circ\alpha_2^{-1}\] is contained in the normal subgroup of $\Bir_{\p^n}(\k)$ generated by $\Aut_{\p^n}(\k)$, for each $t\in\A^1\setminus\{0\}$. Therefore, $f=\rho_2(0)$ is contained in the closure of the normal subgroup of $\Bir_{\p^n}(\k)$ generated by $\Aut_{\p^n}(\k)$.

$(2)$ Let $\{\id\}\neq N\subset\Bir_{\p^n}(\k)$ be a closed normal subgroup (with respect to the Zariski or to the Euclidean topology). It follows from Proposition~\ref{Prop:LimitconjugatesNormal} and Lemma~\ref{Lem:Autinnormal} that $\Aut_{\p^n}(\k)\subset N$. Since $N$ is closed, it contains the closure of the normal subgroup generated by $\Aut_{\p^n}(\k)$, which is equal to $\Bir_{\p^n}(\k)$.
\end{proof}

Note that Proposition~\ref{Prop:TheProp}, together with Lemma~\ref{Lemm:PSLdenseZariski} (for the dimension $1$ in the case of the Zariski topology), yields Theorems~\ref{TheoremZariski} and~\ref{TheoremEuclidean}. 

\subsection{Connectedness of the Cremona groups}\label{Subsec:Connected}
The group $\Bir_{\p^n}$ is connected, for the Zariski topology \cite{Bl10}. More precisely, we have the following:
\begin{proposition}\label{Thm51ofBl10} \cite[Th\'eor\`eme 5.1]{Bl10}
Let $\kk$ be an algebraically closed field and $n\ge 1$. For each $f,g\in \Bir_{\p^n}(\kk)$, there is an open subset $U\subset \A^1(\kk)$ that contains $0$ and $1$, and a morphism $\rho\colon U\to \Bir_{\p^n}(\kk)$ such that $\rho(0)=f$ and $\rho(1)=g$.
\end{proposition}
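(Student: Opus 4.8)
The statement is Proposition~\ref{Thm51ofBl10}, essentially a restatement of \cite[Th\'eor\`eme 5.1]{Bl10}, so the natural strategy is to reduce to the building blocks already available and assemble a path. Since $\kk$ is algebraically closed, $\Bir_{\p^n}(\kk)$ is generated by $\Aut_{\p^n}(\kk)=\PGL_{n+1}(\kk)$ together with a single non-linear element (for $n\ge 2$, e.g.~the standard quadratic involution; for $n=1$ the group is just $\PGL_2$). It therefore suffices to show two things: first, that the identity can be joined to any $h\in\Bir_{\p^n}(\kk)$ by such a morphism from an open $U\ni 0,1$, and second, that the set of pairs $(f,g)$ joinable in this way is closed under pre- and post-composition with a fixed element, so that one can concatenate. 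The second point is formal: if $\rho\colon U\to\Bir_{\p^n}$ joins $f$ to $g$ and $a\in\Bir_{\p^n}(\kk)$, then $t\mapsto a\circ\rho(t)$ and $t\mapsto\rho(t)\circ a$ are again morphisms (composition on either side by a fixed birational map preserves the functor $\Bir_{\p^n}$), joining $a f$ to $a g$ and $f a$ to $g a$; and two paths through a common point can be glued after shrinking $U$, because the domains of definition of morphisms $\A^1\dasharrow\Bir_{\p^n}$ restrict to open subsets.

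\emph{First I would} handle $\Aut_{\p^n}(\kk)$: since $\PGL_{n+1}(\kk)$ is a connected algebraic group (complement of $\{\det=0\}$ in a projective space over an algebraically closed field), any two of its points lie on a rational curve in the group, and in fact, using that it is generated by elementary matrices and diagonal matrices, one can write down an explicit morphism $\A^1\to\PGL_{n+1}(\kk)$ from $\id$ to any prescribed $\alpha$: a one-parameter subgroup $t\mapsto\exp(tM)$ in characteristic zero, or more uniformly, a product of paths $t\mapsto I+t E_{ij}$ through elementary matrices and $t\mapsto\mathrm{diag}(1,\dots,t,\dots,1)$ suitably reparametrised so that $t=0$ gives $\id$ and $t=1$ gives the chosen factor, glued as above. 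Composing such paths shows every linear map is joined to $\id$.

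\emph{Then I would} deal with the non-linear generator $\sigma$, using the construction of \S\ref{Subssec:Fixed}. Pick a point $p\in\p^n(\kk)$ fixed by $\sigma$ at which $\sigma$ is a local isomorphism (the standard quadratic involution has such fixed points); Proposition~\ref{Prop:Limitconjugates} then gives a $\kk$-morphism $\rho\colon\A^1\to\Bir_{\p^n}$ with $\rho(1)=\sigma$ and $\rho(0)\in\Aut_{\p^n}(\kk)$, reducing the case of $\sigma$ to the linear case already treated. If one prefers not to hunt for a suitable fixed point, one can instead write $\sigma$ explicitly in coordinates and exhibit a one-parameter degeneration $t\mapsto\sigma_t$ with $\sigma_1=\sigma$ and $\sigma_0$ linear — for the quadratic involution $[x_0:x_1:\cdots:x_n]\dasharrow[\,x_1\cdots x_n : x_0 x_2\cdots x_n : \cdots\,]$ a direct degeneration works. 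Combining: given arbitrary $f,g$, factor $g\circ f^{-1}$ as a word in linear maps and copies of $\sigma^{\pm1}$, build a path from $\id$ to $g\circ f^{-1}$ by concatenating the pieces above (shrinking the parameter interval at each gluing to an open $U\ni 0,1$, which stays nonempty since $\kk$ is infinite), and finally post-compose the whole path with $f$ to get a morphism $U\to\Bir_{\p^n}$ from $f$ to $g$.

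\emph{The main obstacle} is bookkeeping rather than a genuine difficulty: one must ensure that after finitely many concatenations the resulting map is still a morphism defined on an \emph{open} subset of $\A^1$ containing both $0$ and $1$ — the intermediate conjugating maps $\nu(t)$ from Proposition~\ref{Prop:Limitconjugates} are only defined on $\A^1\setminus\{0\}$, so one has to track where each piece degenerates and reparametrise/translate so that the finitely many bad parameter values are avoided and the endpoints land at $0$ and $1$. Since there are only finitely many pieces and $\kk$ is infinite, such reparametrisations exist; verifying that each elementary move (composition by a fixed element, gluing at a common value, affine reparametrisation) preserves the property of being a $\kk$-morphism $U\to\Bir_{\p^n}$ is the content of the argument, and it is exactly what \cite[Th\'eor\`eme 5.1]{Bl10} carries out.
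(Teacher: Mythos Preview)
The paper does not actually prove this proposition; it is quoted from \cite{Bl10}, and the paper then proves the stronger Proposition~\ref{Prop:Connectenessfamilies} (any infinite field, with $U=\A^1$). So the relevant comparison is with that argument.

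Your proposal has a genuine gap. You assert that for $\kk$ algebraically closed and $n\ge 2$, $\Bir_{\p^n}(\kk)$ is generated by $\PGL_{n+1}(\kk)$ together with a single non-linear element such as the standard quadratic involution. That is the Noether--Castelnuovo theorem for $n=2$, but for $n\ge 3$ no explicit (or even finite) generating set of $\Bir_{\p^n}(\kk)$ is known; the question is wide open. Your strategy ``factor $gf^{-1}$ as a word in generators and concatenate paths to each generator'' therefore only works in dimension $\le 2$.

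The paper's proof of Proposition~\ref{Prop:Connectenessfamilies} bypasses this entirely: instead of factoring $g$, it applies Proposition~\ref{Prop:Limitconjugates} \emph{directly to an arbitrary $g$}. After composing with some $\alpha\in\PSL_{n+1}(\k)$ so that $\alpha g$ fixes a $\k$-point and is a local isomorphism there, the degeneration $t\mapsto \nu(t)^{-1}(\alpha g)\,\nu(t)$ extends to a morphism $\A^1\to\Bir_{\p^n}$ sending $1$ to $\alpha g$ and $0$ to a linear map; so $g$ differs from an element of the set $N$ by linear factors only. Combined with Corollary~\ref{Coro:PSLn} (and, over non-closed fields, Example~\ref{Example:Twoderivatives} to pass from $\PSL$ to $\PGL$), this handles all $n\ge 2$ without any generation statement. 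Your instinct to invoke Proposition~\ref{Prop:Limitconjugates} was correct; the mistake was to apply it only to a fixed generator $\sigma$ rather than to the given element $g$ itself.
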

This corresponds to say that $\Bir_{\p^n}(\kk)$ is ``rationally connected''. We will generalise this for any field $\k$, and provide a morphism from the whole $\A^1$ (Proposition~\ref{Prop:Connectenessfamilies} below), showing then that $\Bir_{\p^n}(\k)$ is ``$\A^1$-uniruled''.\\

Let us recall the following classical fact.
\begin{lemma}\label{Lemm:MorphismAmSLn}
For each field $\k$ and each integer $n\ge 2$, there is an integer $m$ and a $\k$-morphism $\rho\colon \A^m\to \SL_n$ such that $\rho(\A^m(\k))=\SL_n(\k)$.
\end{lemma}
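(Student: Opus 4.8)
The plan is to write $\rho$ down explicitly as a long product of elementary transvections with independent parameters. The only non-formal ingredient I would use is the classical fact, a consequence of Gauss elimination, that there is an integer $N=N(n)$ depending on $n$ alone such that for \emph{every} field $\k$, every matrix of $\SL_n(\k)$ is a product of at most $N$ elementary matrices $E_{i,j}(\lambda):=I_n+\lambda e_{i,j}$, where $1\le i\ne j\le n$, $\lambda\in\k$, $e_{i,j}$ is the matrix unit and $I_n$ the identity matrix (for the generation of $\SL_n$ by transvections see e.g. \cite[Chapitre II]{bib:Dieudonne}; the bound $N$, uniform in $\k$, is immediate from the elimination algorithm, which runs identically over any field and uses $O(n^2)$ transvections to reduce an element of $\SL_n$ to $\id$: clearing the first row and column costs $O(n)$ transvections, one recurses, and the resulting diagonal matrix of $\SL_n$ is a product of $O(n)$ transvections).

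First I would fix an enumeration $(i_1,j_1),\dots,(i_P,j_P)$ of the $P:=n(n-1)$ ordered pairs $(i,j)$ with $i\ne j$, set $m:=N\cdot P$, write the coordinates on $\A^m$ as $(\lambda_{k,s})_{1\le k\le N,\,1\le s\le P}$, and define
\[
\rho\colon\A^m\to \SL_n,\qquad
\rho\bigl((\lambda_{k,s})_{k,s}\bigr)=\prod_{k=1}^{N}\ \prod_{s=1}^{P}E_{i_s,j_s}(\lambda_{k,s}),
\]
the products taken in the indicated order. Each entry of $\rho((\lambda_{k,s}))$ is a polynomial with integer coefficients in the $\lambda_{k,s}$ and the determinant is identically $1$, so $\rho$ is a morphism of algebraic varieties defined over $\k$ (indeed over the prime field), with image inside $\SL_n$.

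It then remains to check $\rho(\A^m(\k))=\SL_n(\k)$. Given $A\in\SL_n(\k)$, I would write $A=E_{a_1,b_1}(\mu_1)\cdots E_{a_L,b_L}(\mu_L)$ with $L\le N$ and each $(a_r,b_r)$ among the $(i_s,j_s)$; since $E_{i,j}(0)=I_n$, this product is realised by the point of $\A^m(\k)$ whose $k$-th block $(\lambda_{k,1},\dots,\lambda_{k,P})$, for $k\le L$, has the coordinate indexed by $(a_k,b_k)$ equal to $\mu_k$ and all others zero, and whose blocks with $k>L$ vanish: the $k$-th block evaluates to $E_{a_k,b_k}(\mu_k)$ for $k\le L$ and to $I_n$ for $k>L$, so $\rho$ sends this point to $A$. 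Hence $\SL_n(\k)\subseteq\rho(\A^m(\k))$, the reverse inclusion is clear, and the lemma follows. The only step I expect to require genuine care is the uniformity in $\k$ of the bound $N(n)$ — everything else is formal — but this is exactly what the constructive Gauss elimination supplies.
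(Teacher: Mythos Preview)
Your proof is correct and follows essentially the same approach as the paper's: both rest on bounded generation of $\SL_n(\k)$ by elementary transvections, uniformly in~$\k$, and then assemble a single polynomial morphism from $\A^m$ as a long product of one-parameter transvection maps. The only organisational difference is that the paper makes the diagonal step explicit --- reducing first to (diagonal)$\times$(transvections) and then exhibiting the $2\times 2$ identity expressing $\mathrm{diag}(\lambda,\lambda^{-1})$ as a product of four transvections --- whereas you fold this into the phrase ``the resulting diagonal matrix of $\SL_n$ is a product of $O(n)$ transvections''; conversely, your packaging of the single morphism (one block per factor, each block running over all $n(n-1)$ pairs with unused parameters set to~$0$) is more explicit than the paper's concluding sentence ``enlarge $m$ and obtain one morphism for all maps''.
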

\begin{proof}
Using Gauss-Jordan elimination, every element of $\SL_n(\k)$ is a product of a diagonal matrix and  $r$ elementary matrices of the first kind: matrices of the form $I+\lambda e_{i,j}$, $\lambda\in \k$, $i\not=j$, where $(e_{i,j})_{i,j=1,\dots,n}$ is the canonical basis of the vector space of $n\times n$-matrices. Moreover, the number $r$ can be chosen to be the same for all elements of $\SL_n(\k)$. We then observe  that 
\[\left(\begin{array}{cc}
1&\lambda-1\\
0&1\end{array}\right)
\left(\begin{array}{cc}1&0\\
1& 1\end{array}\right)
\left(\begin{array}{cc}
1&\lambda^{-1}-1\\
0&1
\end{array}\right)
\left(\begin{array}{cc}
1&0\\
-\lambda& 1
\end{array}\right)=\left(\begin{array}{cc}
\lambda &0\\
0& \lambda^{-1}
\end{array}\right)\]
for each $\lambda\in \k^*$. Using finitely many such products, we obtain then all diagonal elements. This gives the existence of $s\in \mathbb{N}$, only dependent on $n$, such that every element of $\SL_n(\k)$ is a product of $s$ elementary matrices of the first kind.

Denoting by $\nu_{i,j}\colon\A^1\to \SL_n(\k)$ the  $\k$-morphism sending $\lambda$ to $I+\lambda e_{i,j}$, this shows that every element of $\SL_n(\k)$ is in the image of a product morphism $\A^m\to \SL_n(\k)$ of finitely many $\nu_{i,j}$. The number of such maps being finite, we can enlarge $m$ and obtain one morphism for all maps.
\end{proof}
\begin{corollary}\label{Coro:PSLn}
For each field $\k$, each integer $n\ge 2$ and all $f,g\in \PSL_n(\k)$, there exists a $\k$-morphism $\nu\colon \A^1\to \PSL_n$ such that $\nu(0)=f$ and $\nu(1)=g$.
\end{corollary}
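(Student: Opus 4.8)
The plan is to lift $f$ and $g$ to $\SL_n(\k)$, connect the two lifts inside $\SL_n$ by an affine line pulled back through the morphism of Lemma~\ref{Lemm:MorphismAmSLn}, and then push the resulting family down to $\PSL_n$. First I would choose lifts $\tilde f,\tilde g\in\SL_n(\k)$ of $f$ and $g$. These exist because $\PSL_n(\k)$ is, by definition, the kernel of $\det\colon\PGL_n(\k)\to\k^*/(\k^*)^n$: if $M\in\GL_n(\k)$ represents an element of $\PSL_n(\k)$ and $\det(M)=\lambda^n$ with $\lambda\in\k^*$, then $\lambda^{-1}M\in\SL_n(\k)$ has the same class in $\PGL_n(\k)$ as $M$.

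Next, Lemma~\ref{Lemm:MorphismAmSLn} provides an integer $m$ and a $\k$-morphism $\rho\colon\A^m\to\SL_n$ with $\rho(\A^m(\k))=\SL_n(\k)$, so there are $\k$-points $a_0,a\in\A^m(\k)$ with $\rho(a_0)=\id$ and $\rho(a)=\tilde g\tilde f^{-1}$. The affine map $\ell\colon\A^1\to\A^m$, $\ell(t)=a_0+t(a-a_0)$, is a morphism defined over $\k$ (its coefficients lie in $\k$) with $\ell(0)=a_0$ and $\ell(1)=a$, so $\sigma:=\rho\circ\ell\colon\A^1\to\SL_n$ is a $\k$-morphism with $\sigma(0)=\id$ and $\sigma(1)=\tilde g\tilde f^{-1}$. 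Since right multiplication by the fixed matrix $\tilde f\in\SL_n(\k)$ is a $\k$-morphism $\SL_n\to\SL_n$, the map $\mu\colon\A^1\to\SL_n$, $\mu(t)=\sigma(t)\tilde f$, is a $\k$-morphism with $\mu(0)=\tilde f$ and $\mu(1)=\tilde g$. Composing $\mu$ with the quotient morphism $q\colon\SL_n\to\PGL_n$ — the restriction to $\SL_n$ of the (everywhere defined) morphism $\M_n\setminus\{0\}\to\p^{n^2-1}$, whose image on $\SL_n$ lands in $\PSL_n$ — yields the desired $\k$-morphism $\nu:=q\circ\mu\colon\A^1\to\PSL_n$ with $\nu(0)=q(\tilde f)=f$ and $\nu(1)=q(\tilde g)=g$. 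When $f=g$ one simply takes $\tilde g=\tilde f$ and $a=a_0$, making $\nu$ constant, so there is nothing to special-case.

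I do not expect a genuine obstacle here; the two points that deserve a sentence each are the existence of the lifts $\tilde f,\tilde g$ (handled above from the definition of $\PSL_n(\k)$) and the verification that the composite $\A^1\xrightarrow{\ell}\A^m\xrightarrow{\rho}\SL_n\hookrightarrow\GL_n\xrightarrow{q}\PGL_n=\Aut_{\p^{n-1}}$ is a $\k$-morphism in the sense used throughout the paper, with image in $\PSL_n(\k)$ on $\k$-points. This is clear, since it is a composition of morphisms of algebraic varieties all defined over $\k$, and any morphism of $\k$-varieties $A\to\Aut_X$ is in particular a $\k$-morphism $A\to\Bir_X$.
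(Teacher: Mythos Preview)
Your proof is correct and follows the same idea as the paper: use Lemma~\ref{Lemm:MorphismAmSLn} to parametrise $\SL_n(\k)$ by an affine space, pull back along an affine line through two chosen preimages, and project to $\PSL_n$. The paper does this slightly more directly by picking $v,w\in\A^m(\k)$ with $\rho(v)=\tilde f$, $\rho(w)=\tilde g$ and setting $\nu(t)=\rho(v+t(w-v))$, avoiding your detour through $\id$, $\tilde g\tilde f^{-1}$ and the subsequent right-multiplication by $\tilde f$; but this is a cosmetic difference, not a different method.
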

\begin{proof}
It suffices to take a morphism $\rho\colon \A^m\to \SL_n$ as in Lemma~\ref{Lemm:MorphismAmSLn}, to choose $v,w\in \A^m(\k)$ such that $\rho(v)=f$, $\rho(w)=g$ in $\PSL_n(\k)$, and to define $\nu(t)=\rho(v+t(w-v))$.
\end{proof}
\begin{remark}
By construction, Corollary~\ref{Coro:PSLn} also works for $\SL_n(\k)$, but is in fact false for $\GL_n(\k)$. Indeed, every $\k$-morphism $\nu \colon \A^1\to \GL_n$ gives rise to a morphism $\det\circ \nu \colon \A^1\to \A^1\setminus \{0\}$, necessarily constant. As every morphism $\A^1\to \PGL_n$ lifts to a morphism $\A^1\to \GL_n$, the same problem holds for $\PGL_n$.
\end{remark}

\begin{example}\label{Example:Twoderivatives}
Let $\k$ be a field, $n\ge 2$ and $\lambda\in \k^*$. We consider $g\in \Bir_{\p^n}(\k)$ given by
$$g\colon [x_0:\dots:x_n]\mapsto \left[\frac{x_0(x_1+\lambda x_{2})+x_1x_2}{x_1+x_2}:x_1:\dots:x_n\right]$$
We observe that $p_1=[0:1:0:\dots:0]$ and $p_2=[0:0:1:0:\dots:0]$ are both fixed by~$g$. In local charts $x_1=1$ and $x_2=1$, the map $g$ becomes:
\[\begin{array}{rclll}
\ [x_0:1:x_2:x_3:\dots:x_n]&\mapsto& \left[\dfrac{x_0(1+\lambda x_2 )+x_2}{x_2+1}:1:x_2:x_3:\dots:x_n\right]\vspace{0.1cm}\\
\ [x_0:x_1:1:x_3:\dots:x_n]&\mapsto& \left[\dfrac{x_0(x_1+\lambda)+x_1}{x_1+1}:x_1:1:x_3:\dots:x_n\right]\end{array}\]
Applying Proposition~\ref{Prop:Limitconjugates} to the two fixed points, we then get two $\k$-morphisms $\rho_1,\rho_2\colon \A^1\to \Bir_{\p^n}$ such that $\rho_1(1)=g=\rho_2(1)$ and $\rho_1(0),\rho_2(0)\in \Aut_{\p^n}(\k).$ The two elements are provided by the construction Example~\ref{ExampleConjugates}. Choosing for this one the affine coordinates $x_1\not=0$ and $x_2\not=0$, using permutations of the coordinates, yield the following maps, corresponding to the linear parts in these affine spaces:
\[\begin{array}{rrclll}
\rho_1(0)\colon &[x_0:x_1:x_2:x_3:\dots:x_n]&\mapsto& \left[x_0+x_2:x_1:x_2:x_3:\dots:x_n\right],\vspace{0.1cm}\\
\rho_2(0)\colon &[x_0:x_1:x_2:x_3:\dots:x_n]&\mapsto& \left[x_0\lambda+x_1:x_1:x_2:x_3:\dots:x_n\right].\end{array}\]
\end{example}
\bigskip

We can now give the following generalisation of  \cite[Th\'eor\`eme 5.1]{Bl10} (Proposition~\ref{Thm51ofBl10}):

\begin{proposition}\label{Prop:Connectenessfamilies}
For each infinite field $\k$, each integer $n\ge 2$ and all $f,g\in \Bir_{\p^n}(\k)$, there exists a $\k$-morphism $\nu\colon \A^1\to \Bir_{\p^n}$ such that $\nu(0)=f$ and $\nu(1)=g$.
\end{proposition}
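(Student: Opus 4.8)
The plan is to prove that every $f\in\Bir_{\p^n}(\k)$ can be joined to $\id$ by a $\k$-morphism $\A^1\to\Bir_{\p^n}$ sending $0$ to $f$ and $1$ to $\id$; the proposition follows immediately, since if $\rho,\sigma\colon\A^1\to\Bir_{\p^n}$ are $\k$-morphisms with $\rho(0)=f$, $\rho(1)=\id$, $\sigma(0)=g$, $\sigma(1)=\id$, then $t\mapsto\rho(t)\circ\sigma(t)^{-1}\circ g$ is a $\k$-morphism sending $0$ to $f$ and $1$ to $g$. I will use freely that pointwise composition and pointwise inversion of $\k$-morphisms $\A^1\to\Bir_{\p^n}$, and precomposition of such a morphism with the automorphism $t\mapsto 1-t$ of $\A^1$, again give $\k$-morphisms $\A^1\to\Bir_{\p^n}$; consequently ``being joined by such a morphism'' is an equivalence relation on $\Bir_{\p^n}(\k)$, and $S:=\{h\in\Bir_{\p^n}(\k)\mid h\text{ joined to }\id\}$ is a subgroup.

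First I would reduce to $f\in\Aut_{\p^n}(\k)$: for $f=\id$ take the constant morphism, and for $f\neq\id$ the construction in the proof of Proposition~\ref{Prop:TheProp} furnishes a $\k$-morphism $\rho_2\colon\A^1\to\Bir_{\p^n}$ with $\rho_2(0)=f$ and $\rho_2(1)\in\Aut_{\p^n}(\k)$, so $f$ is joined to an element of $\Aut_{\p^n}(\k)$. It thus suffices to show $\Aut_{\p^n}(\k)=\PGL_{n+1}(\k)\subseteq S$. By Corollary~\ref{Coro:PSLn} (with second point $\id$), every element of $\PSL_{n+1}(\k)$ is joined to $\id$ inside $\PSL_{n+1}\subset\Bir_{\p^n}$, so $\PSL_{n+1}(\k)\subseteq S$; since $S$ is a subgroup and $\PGL_{n+1}(\k)/\PSL_{n+1}(\k)\cong\k^*/(\k^*)^{n+1}$ via the determinant, it remains only to place in $S$, for each $\lambda\in\k^*$, one element of $\PGL_{n+1}(\k)$ of determinant $\lambda$. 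This is exactly what Example~\ref{Example:Twoderivatives} supplies: a single $g\in\Bir_{\p^n}(\k)$ and $\k$-morphisms $\rho_1,\rho_2\colon\A^1\to\Bir_{\p^n}$ with $\rho_1(1)=g=\rho_2(1)$, where $\rho_1(0)\colon[x_0:\dots:x_n]\mapsto[x_0+x_2:x_1:\dots:x_n]$ is a transvection (hence lies in $\PSL_{n+1}(\k)\subseteq S$) and $\rho_2(0)\colon[x_0:\dots:x_n]\mapsto[\lambda x_0+x_1:x_1:\dots:x_n]$ has determinant $\lambda$. Since $\rho_2(0)$ is joined to $g$ and $g$ to $\rho_1(0)\in S$, we get $\rho_2(0)\in S$. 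Running over all $\lambda$ shows $S$ meets every coset of $\PSL_{n+1}(\k)$ in $\PGL_{n+1}(\k)$, hence $\PGL_{n+1}(\k)\subseteq S$, which finishes the argument.

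The step I expect to be the main obstacle — and the reason for routing through $\PSL_{n+1}$ and Example~\ref{Example:Twoderivatives} — is joining $\Aut_{\p^n}(\k)$ to $\id$: this cannot be done while staying inside $\Aut_{\p^n}$, because a $\k$-morphism $\A^1\to\PGL_{n+1}$ lifts to $\A^1\to\GL_{n+1}$ and hence has constant determinant, making the class in $\k^*/(\k^*)^{n+1}$ a genuine obstruction. The escape is the mechanism of Example~\ref{Example:Twoderivatives}, where two local linearisations (via Proposition~\ref{Prop:Limitconjugates}) of one and the same birational map at two distinct fixed points produce linear parts of different determinant, so that composing families transports connectedness across determinant classes. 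Everything else — that the listed operations on morphisms $\A^1\to\Bir_{\p^n}$ stay within morphisms, and the coset bookkeeping with the determinant — is routine.
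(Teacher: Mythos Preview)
Your proposal is correct and follows essentially the same approach as the paper's proof: both define the subgroup of elements joinable to $\id$, obtain $\PSL_{n+1}(\k)$ inside it via Corollary~\ref{Coro:PSLn}, use Example~\ref{Example:Twoderivatives} to reach the remaining determinant classes and hence all of $\PGL_{n+1}(\k)$, and treat a general birational map by linearisation at a fixed point. The only cosmetic difference is the order: you first reduce a general $f$ to $\Aut_{\p^n}(\k)$ (quoting the $\rho_2$ from the proof of Proposition~\ref{Prop:TheProp}) and then show $\Aut_{\p^n}(\k)\subseteq S$, whereas the paper first establishes $\Aut_{\p^n}(\k)\subset N$ and afterwards handles general $g$ directly via Proposition~\ref{Prop:Limitconjugates}; the underlying constructions are the same.
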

\begin{proof}
Multiplying the morphism with $f^{-1}$, we can assume that $f=\id$. We then denote by $N\subset \Bir_{\p^n}(\k)$ the subset given by
\[N=\left\{
g\in \Bir_{\p^n}(\k)\left| \begin{array}{ll}\text{ there exists a $\k$-morphism }\nu\colon \A^1\to \Bir_{\p^n}\\\text{ such that }\nu(0)=\id\text{ and }\nu(1)=g\end{array}\right.\right\}.\]
Observe that $N$ is a normal subgroup of $\Bir_{\p^n}(\k)$ that contains $\PSL_{n+1}(\k)$ by Corollary~\ref{Coro:PSLn}. As $N$ is a priori not closed, we cannot apply Theorem~\ref{TheoremZariski}. However, we will apply Proposition~\ref{Prop:Limitconjugates} and Example~\ref{Example:Twoderivatives} to obtain the result.

First, taking $\lambda,g,\rho_1,\rho_2$ as in Example~\ref{Example:Twoderivatives}, the morphisms $t\mapsto \rho_i(t)\circ \rho_i(0)^{-1}$, $i=1,2$, show that $g\circ (\rho_1(0))^{-1}, g\circ (\rho_2(0))^{-1}\in N$, which implies that $\rho_1(0)\circ (\rho_2(0))^{-1}\in N$. Since $\rho_1(0)\in \PSL_{n+1}(\k)\subset N$, this implies that 
\[\rho_2(0)\colon [x_0:x_1:x_2:x_3:\dots:x_n]\mapsto \left[x_0\lambda+x_1:x_1:x_2:x_3:\dots:x_n\right]\]
belongs to $N$, for each $\lambda\in \k^*$. Hence, $\Aut_{\p^n}(\k)=\PGL_{n+1}(\k)\subset N$.

Second, we take any $g\in \Bir_{\p^n}(\k)$ of degree $d\ge 2$, take a point $p\in \p^n(\k)$ such that $g$ induces a local isomorphism at $p$, choose $\alpha\in \PSL_{n+1}(\k)$ such that $\alpha\circ g$ fixes $p$. Proposition~\ref{Prop:Limitconjugates} yields then the existence of a $\k$-morphism $\rho\colon \A^1\to \Bir_{\p^n}$, such that $\rho(1)=\alpha\circ g$ and $\rho(0)\in \Aut_{\p^n}(\k)$. Choosing $\rho'\colon \A^1\to \Bir_{\p^n}$ given by $\rho'(t)= \rho(t)\circ \rho(0)^{-1}$, we obtain that
$\rho'(1)=\alpha\circ  g \circ \rho(0)^{-1}\in N$. Since $\alpha,\rho(0)\in \Aut_{\p^n}(\k)\subset N$, this shows that $g\in N$ and concludes the proof.\end{proof}
\begin{corollary}\label{Cor:ConnectedZariski}
For each infinite field $\k$ and each $n\ge 1$, the group $\Bir_{\p^n}(\k)$ is connected, for the Zariski topology.
\end{corollary}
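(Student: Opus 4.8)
The plan is to treat the cases $n\ge 2$ and $n=1$ separately: the first is an immediate consequence of Proposition~\ref{Prop:Connectenessfamilies}, while the second needs a short direct argument about $\PGL_2(\k)$. The one elementary fact used throughout is that $\A^1(\k)$, and more generally the complement of any finite set in $\p^1(\k)$, is connected for the Zariski topology: the proper Zariski-closed subsets of these curves are precisely the finite ones, so, since $\k$ is infinite, such a space cannot be written as a disjoint union of two nonempty closed subsets.

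Assume first $n\ge 2$ and fix $g\in\Bir_{\p^n}(\k)$. By Proposition~\ref{Prop:Connectenessfamilies} there is a $\k$-morphism $\nu\colon\A^1\to\Bir_{\p^n}$ with $\nu(0)=\id$ and $\nu(1)=g$; by the very definition of the Zariski topology the induced map $\A^1(\k)\to\Bir_{\p^n}(\k)$ is continuous, hence its image is a connected subset of $\Bir_{\p^n}(\k)$ containing both $\id$ and $g$. Thus every element of $\Bir_{\p^n}(\k)$ lies in the connected component of $\id$, so the group is connected.

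For $n=1$ we have $\Bir_{\p^1}(\k)=\Aut_{\p^1}(\k)=\PGL_2(\k)$, since every birational self-map of $\p^1$ is an automorphism, and by~\S\ref{Subsec:Projlingroups} the Zariski topology on this group is the usual one of the variety $\PGL_2$, viewed as the complement in $\p^3(\k)$ of the determinant hypersurface. Given two $\k$-points of $\PGL_2(\k)$, the line $L\cong\p^1$ through them is defined over $\k$ and, containing a point outside the determinant hypersurface, is not contained in it; hence $L$ meets $\PGL_2(\k)$ in $L(\k)$ minus a finite set, which is connected by the preliminary fact. So any two points of $\PGL_2(\k)$ lie in a common connected subset, and $\PGL_2(\k)$ is connected.

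I do not expect any serious obstacle here, the statement being a corollary; the one point deserving care is that the case $n=1$ cannot be reduced to the case $n\ge 2$. Indeed, by the remark after Corollary~\ref{Coro:PSLn}, a $\k$-morphism $\A^1\to\PGL_2$ lifts to $\A^1\to\GL_2$ with constant determinant, so no single $\A^1$-family joins two elements of $\PGL_2(\k)$ whose matrix representatives have determinants in distinct square classes; this is exactly why a separate argument, such as the line argument above, is needed. (As always, the hypothesis that $\k$ is infinite is used whenever a cofinite subset of a line is called connected.)
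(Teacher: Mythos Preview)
Your proof is correct and follows the same overall structure as the paper: for $n\ge 2$ you invoke Proposition~\ref{Prop:Connectenessfamilies} exactly as the paper does, and for $n=1$ you use that $\PGL_2$ sits as the complement of a hypersurface in $\p^3$. The paper's $n=1$ argument is a one-line appeal to $\PGL_2$ being an open subvariety of $\p^3$; your line argument makes explicit why such an open set has connected $\k$-points when $\k$ is infinite, and your closing remark explaining why one cannot simply reuse the $\A^1$-family method in dimension~$1$ is a nice addition not present in the paper.
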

\begin{proof}
For $n=1$, the result follows from the fact that $\Bir_{\p^1}=\Aut_{\p^1}=\PGL_2$ is an open subvariety of $\p^3$. For $n\ge 2$, this follows from Proposition~\ref{Prop:Connectenessfamilies}.
\end{proof}
\begin{corollary}\label{Cor:Pathconnected}
For each $n\ge 2$, the groups $\Bir_{\p^n}(\R)$ and $\Bir_{\p^n}(\C)$ are path-connected, and thus connected, for the Euclidean topology.
\end{corollary}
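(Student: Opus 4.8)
The plan is to deduce path-connectedness for the Euclidean topology from the $\A^1$-uniruledness established in Proposition~\ref{Prop:Connectenessfamilies}, using the fact that over $\k=\R$ or $\k=\C$ the affine line $\A^1(\k)$ is itself path-connected (it is $\R$, respectively $\C\simeq \R^2$) and that $\k$-morphisms $\A^1\to \Bir_{\p^n}$ are continuous for the Euclidean topology. Concretely, given $f,g\in\Bir_{\p^n}(\k)$, Proposition~\ref{Prop:Connectenessfamilies} furnishes a $\k$-morphism $\nu\colon\A^1\to\Bir_{\p^n}$ with $\nu(0)=f$ and $\nu(1)=g$. This $\nu$ induces a map $\nu\colon\A^1(\k)\to\Bir_{\p^n}(\k)$ which, by Lemma~\ref{Lemm:MorphismContinuousEuclidean}, is continuous for the Euclidean topologies (note $\R$ and $\C$ are indeed locally compact nondiscrete local fields, so the lemma applies). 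Now choose a continuous path $\gamma\colon[0,1]\to\A^1(\k)$ with $\gamma(0)=0$ and $\gamma(1)=1$ — for $\k=\R$ take $\gamma(s)=s$, for $\k=\C$ take the straight segment $\gamma(s)=s$ as well, viewed inside $\C$. Then $\nu\circ\gamma\colon[0,1]\to\Bir_{\p^n}(\k)$ is a continuous path joining $f$ to $g$. Hence $\Bir_{\p^n}(\k)$ is path-connected, and a path-connected space is connected, so it is also connected for the Euclidean topology.

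The only point requiring a little care is making sure that the source of the morphism $\nu$ supplied by Proposition~\ref{Prop:Connectenessfamilies} is really all of $\A^1$ (and not merely an open subset, as in the weaker Proposition~\ref{Thm51ofBl10}); this is exactly what Proposition~\ref{Prop:Connectenessfamilies} guarantees, so that the domain $\A^1(\k)$ is globally path-connected in the Euclidean topology and no issue arises about whether $0$ and $1$ lie in the same connected component of the domain. I do not expect any genuine obstacle here: the substantive work — producing the $\A^1$-family connecting two arbitrary birational maps — has already been done in Proposition~\ref{Prop:Connectenessfamilies}, and what remains is the soft topological observation that composing with a continuous parametrising path $[0,1]\to\A^1(\k)$ and with the Euclidean-continuous evaluation $A(\k)\to\Bir_{\p^n}(\k)$ yields a Euclidean path. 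If one wanted to avoid invoking Lemma~\ref{Lemm:MorphismContinuousEuclidean} directly, one could instead note that by construction the family $\nu$ factors (on each piece) through some $H_d$, on which the Euclidean topology is the subspace topology from a projective space, and $\pi_d$ is continuous by definition of the Euclidean topology on $\Bir_{\p^n}(\k)$; but invoking the lemma is cleaner.
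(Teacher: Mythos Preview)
Your proof is correct and follows essentially the same approach as the paper: invoke Proposition~\ref{Prop:Connectenessfamilies} to obtain a $\k$-morphism $\nu\colon\A^1\to\Bir_{\p^n}$ with $\nu(0)=f$, $\nu(1)=g$, use Lemma~\ref{Lemm:MorphismContinuousEuclidean} for Euclidean continuity, and then restrict to the real segment $[0,1]\subset\A^1(\k)$ to get the desired path. The paper phrases the last step as a restriction rather than a composition with an explicit $\gamma$, but this is the same thing.
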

\begin{proof}
Let us fix $\k=\R$ or $\k=\C$. For each $f,g\in \Bir_{\p^n}(\k)$, there is a $\k$-morphism $\nu\colon \A^1\to \Bir_{\p^n}$ such that $\nu(0)=f$ and $\nu(1)=g$ (Proposition~\ref{Prop:Connectenessfamilies}). The corresponding map $\k=\A^1(\k)\to \Bir_{\p^n}(\k)$ is then continuous, for the Euclidean topologies (Lemma~\ref{Lemm:MorphismContinuousEuclidean}). The restriction of this map to the interval $[0,1]\subset \R\subset \C$ yields a map $[0,1]\to \Bir_{\p^n}(\k)$, continuous for the Euclidean topologies, sending $0$ to $f$ and $1$ to $g$.
\end{proof}

Theorem~\ref{Thm:Connected} is now proven, as a consequence of Proposition~\ref{Prop:Connectenessfamilies} and Corollaries~\ref{Cor:ConnectedZariski} and~\ref{Cor:Pathconnected}.

\end{document}